\newtheorem{theorem}{Theorem}[section]
\newtheorem{lemma}[theorem]{Lemma}
\newtheorem{proposition}[theorem]{Proposition}
\theoremstyle{definition}
\newtheorem{definition}[theorem]{Definition}
\newtheorem{remark}[theorem]{Remark}
\newcommand{\R}{\mathbb{R}}
\newcommand{\eps}{\varepsilon}
\newcommand{\pa}{\partial}
\numberwithin{equation}{section}
\title{Global Existence for Systems of Isentropic Gas Dynamics via a Weighted Pressure Perturbation Approach}
\author{
  \textbf{Kewang Chen} \\
  \small{School of Mathematics and Statistics, Nanjing University of Information Science and Technology} \\
  \small{Email: kwchen@nuist.edu.cn}
}
\date{\today}
\begin{document}

\maketitle

\begin{abstract}
This paper establishes the global existence of weak entropy solutions for the Cauchy problem of one-dimensional isentropic gas dynamics with general pressure laws (asymptotically $\gamma > 1$). To overcome the degeneracy of strict hyperbolicity at the vacuum, we propose a novel structural regularization method based on a ``Synchronized Dual Translation'' strategy. By introducing a coordinate shift in the mass flux combined with a weighted perturbation in the pressure, we construct a geometric barrier that enforces characteristic degeneracy at a cut-off density $\delta > 0$, thereby preventing vacuum formation for the approximate system.

Our approach offers a significant improvement over previous regularization techniques, particularly by establishing global existence under pressure conditions that are more general and less restrictive than those in the flux-modification method of Lu (2007). While Lu's method modifies the mass flux ($\rho u - 2\delta u$) to create a barrier, it introduces a structural mismatch between the convective and acoustic fields. This mismatch generates inhomogeneous error terms in the entropy analysis, necessitating restrictive technical assumptions on higher-order derivatives (specifically on $P'''$) as a technical necessity to control the singular error terms arising from this mismatch. In contrast, our synchronized construction preserves the structural isomorphism with the standard Euler equations in terms of effective variables. Consequently, the approximate entropy pairs satisfy the homogeneous Generalized Euler-Poisson-Darboux (EPD) equation. Utilizing a WKB-type singularity analysis and the compensated compactness framework, we prove the strong convergence of the approximate solutions as $\delta \to 0$. This result confirms that global existence holds for any pressure law satisfying the natural asymptotic assumption, eliminating the need for the specific higher-order derivative constraints required in prior works to cancel out regularization artifacts.

\vspace{0.5em}
\noindent \textbf{Keywords:} Isentropic Euler equations; Vanishing regularization; Vacuum limit; Weighted pressure; Compensated compactness; Homogeneous EPD equation.
\end{abstract}

\section{Introduction}
\label{sec:introduction}

In this paper, we investigate the global existence and vacuum convergence of weak entropy solutions to the Cauchy problem for the system of one-dimensional isentropic gas dynamics in Eulerian coordinates:
\begin{equation} \label{eq:euler}
\left\{
\begin{aligned}
    & \rho_t + (\rho u)_x = 0, \\
    & (\rho u)_t + (\rho u^2 + P(\rho))_x = 0,
\end{aligned}
\right. \quad (x,t) \in \R \times [0, \infty),
\end{equation}
where $\rho(x,t) \ge 0$ denotes the density, $u(x,t)$ is the velocity, and $P(\rho)$ is the general pressure law.

A prototypical equation of state is the polytropic gas law $P(\rho) = \kappa \rho^\gamma$ with adiabatic exponent $\gamma > 1$. The fundamental mathematical challenge in \eqref{eq:euler} stems from the formation of shock waves and the degeneracy of strict hyperbolicity at the vacuum state $\rho = 0$.

Historically, the global existence of weak solutions has been established via the Glimm scheme \cite{Glimm1965} for small total variation data, and via the method of compensated compactness for large data, pioneered by Tartar \cite{Tartar1979} and DiPerna \cite{DiPerna1983}. To apply these methods, particularly for data containing vacuum, one typically relies on uniform estimates obtained through regularization approximations that prevent the solution from touching the vacuum singularity for fixed regularization parameters.

\subsection{Regularization Strategy: Structural Consistency via Dual Translation}

In a seminal related work, Lu \cite{Lu2007} proposed a regularization method by constructing a sequence of strictly hyperbolic systems. Specifically, Lu introduced the following approximate system:
\begin{equation} \label{eq:Lu_system}
\left\{
\begin{aligned}
    & \rho_t + (\rho u -2\delta u)_x = 0, \\
    & (\rho u)_t + (\rho u^2 + P_1(\rho, \delta))_x = 0,
\end{aligned}
\right.
\end{equation}
where $\delta > 0$ is a regularization parameter and
\begin{equation}
    P_1(\rho, \delta) = \int_{2\delta}^{\rho} \frac{s - 2\delta}{s} P'(s) \, ds.
\end{equation}
The eigenvalues of this system coincide at $\rho = 2\delta$, creating an invariant region that prevents vacuum formation. 

However, a notable feature of \eqref{eq:Lu_system} is the flux modification term $(-2\delta u)_x$. While effective for fixed $\delta$, this specific modification introduces a structural mismatch with the pressure perturbation. As we will discuss in Section \ref{sec:comparison}, this mismatch leads to inhomogeneous error terms (pollution terms) in the entropy analysis—specifically in the Euler-Poisson-Darboux equation—which complicate the rigorous proof of the vacuum limit $\delta \to 0$.

Inspired by Lu's pioneering work but aiming to resolve the structural inconsistency, we propose an alternative regularization technique. We introduce a \textbf{Minimal Shifted Flux} combined with a \textbf{Weighted Pressure Perturbation}.
We approximate \eqref{eq:euler} by adding artificial viscosity to the following system:
\begin{equation} \label{eq:perturbed_system}
\left\{
\begin{aligned}
    & \rho_t + \big( (\rho - \delta) u \big)_x = \eps \rho_{xx}, \\
    & (\rho u)_t + ((\rho-\delta)u^2+ \tilde{P}(\rho, \delta))_x = \eps (\rho u)_{xx},
\end{aligned}
\right.
\end{equation}
where $\eps > 0$ is the viscosity coefficient and $\delta > 0$ is the geometric regularization parameter. The perturbed pressure $\tilde{P}(\rho, \delta)$ is explicitly constructed as:
\begin{equation} \label{eq:P_tilde_def}
\tilde{P}(\rho, \delta) = \int_{\delta}^{\rho} \frac{s^2P'(s) - \delta^2 P'(\delta)}{s^2} \, ds, \quad \text{for } \rho \ge \delta.
\end{equation}

\textbf{Key Innovation: Synchronized Dual Translation.}
Our approach relies on the synchronization of two distinct translations in the phase space at the cut-off density $\rho=\delta$:
\begin{enumerate}
    \item \textbf{Stiffness Translation (Acoustic Degeneracy):} 
    By defining the structural stiffness $g(\rho) = \rho^2 P'(\rho)$, our pressure perturbation $\tilde{P}$ corresponds exactly to a \textbf{vertical translation} of the stiffness function: $\tilde{g}(\rho) = g(\rho) - g(\delta)$. This ensures the sound speed vanishes ($\tilde{c}(\delta) = 0$) while preserving the convexity profile.
    
    \item \textbf{Coordinate Translation (Convective Degeneracy):} 
The minimal flux shift effectively performs a \textbf{horizontal translation} of the density coordinate: $\hat{\rho} = \rho - \delta$. By analyzing the system in terms of this effective density, the convective structure becomes isomorphic to the standard Euler equations. This ensures that the characteristic speeds degenerate to the fluid velocity at the boundary $\rho=\delta$, creating a naturally invariant region.
\end{enumerate}
This synchronized dual translation ensures that the regularized system is mathematically isomorphic to the standard Euler equations under the shifted variables. Unlike previous methods, this isomorphism guarantees that the entropy pairs satisfy the \textbf{standard, homogeneous} Euler-Poisson-Darboux equation without singular error terms. This structural purity is crucial for implementing the fine singularity analysis required to prove the strong convergence of the Young measure as $\delta \to 0$.

\subsection{Main Results}

We present our main results in two steps: first, the global existence for the regularized system with a fixed shield $\delta > 0$; second, the convergence to the weak solution of the original isentropic Euler equations as the shield is removed ($\delta \to 0$).

We assume the pressure function $P(\rho)$ satisfies the following structural conditions consistent with the kinetic formulation and compensated compactness theory:
\begin{enumerate}[label=\textbf{(A\arabic*)}]
    \item \textbf{Regularity and Hyperbolicity:} $P \in C^2(0,\infty)$ and $P'(\rho) > 0$ for $\rho > 0$;
    \item \textbf{Genuine Nonlinearity:} The mapping $\rho \mapsto \rho e(\rho)$ is strictly convex, or equivalently:
    \begin{equation}
        2P'(\rho) + \rho P''(\rho) > 0 \quad \text{for } \rho > 0;
    \end{equation}
    \item \textbf{Asymptotic Behavior:} There exists $\gamma > 1$ such that the pressure behaves asymptotically as a polytropic gas:
    \begin{equation}
        \lim_{\rho \to 0} \frac{P(\rho)}{\rho^\gamma} = \kappa_0 > 0, \quad \lim_{\rho \to \infty} \frac{P(\rho)}{\rho^\gamma} = \kappa_\infty > 0.
    \end{equation}
\end{enumerate}

\begin{theorem}[Global Existence for the Regularized System with Fixed $\delta$]
\label{thm:fixed_delta}
Let $\delta > 0$ be fixed. Assume the pressure $P(\rho)$ satisfies assumptions \textbf{(A1)-(A3)}. Let the initial data $(\rho_0, u_0)$ be measurable functions with finite total energy, lying within a bounded invariant region $\Sigma_\delta$ defined by the modified Riemann invariants:
\begin{equation} \label{eq:invariant_region_delta}
    \Sigma_\delta := \left\{ (\rho, u) : \delta \le \rho \le M, \ C_{\inf} \le z_\delta(\rho,u) \le w_\delta(\rho,u) \le C_{\sup} \right\}.
\end{equation}
Then, as the viscosity coefficient $\varepsilon \to 0$, the sequence of approximate solutions $(\rho^{\varepsilon, \delta}, u^{\varepsilon, \delta})$ constructed via the system \eqref{eq:perturbed_system} converges strongly in $L^1_{loc}(\mathbb{R} \times [0, \infty))$ to a pair $(\rho^\delta, u^\delta)$.

The limit $(\rho^\delta, u^\delta)$ is a global weak entropy solution to the regularized Euler system \eqref{eq:perturbed_system}, satisfying:
\begin{enumerate}
    \item \textbf{Strict Separation from Vacuum:} The density satisfies $\rho^\delta(x,t) \ge \delta$ almost everywhere, ensuring the system remains strictly hyperbolic and non-degenerate;
    \item \textbf{Invariant Region Preservation:} The solution remains confined within the region $(\rho^\delta(x,t), u^\delta(x,t)) \in \Sigma_\delta$ for a.e. $(x,t)$;
    \item \textbf{Entropy Inequality:} The solution satisfies the entropy inequality $\partial_t \eta^* + \partial_x q^* \le 0$ for all strictly convex entropy pairs associated with the effective variables.
\end{enumerate}
\end{theorem}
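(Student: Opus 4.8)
\emph{Strategy and parabolic construction.} I would establish Theorem~\ref{thm:fixed_delta} by the vanishing-viscosity method combined with compensated compactness, keeping $\delta>0$ fixed and allowing all constants to depend on $\delta,M,C_{\inf},C_{\sup}$. First mollify the initial data to $(\rho_0^\mu,u_0^\mu)\in C^\infty$, contained in a slight enlargement of $\Sigma_\delta$. Writing \eqref{eq:perturbed_system} in the conservative unknowns $U^{\eps,\delta}=(\rho^{\eps,\delta},\rho^{\eps,\delta}u^{\eps,\delta})$, whose principal part is the diagonal heat operator $\eps\,\partial_{xx}$, a contraction-mapping argument in a parabolic Sobolev (or H\"older) space yields a unique local-in-time smooth solution; once Step~1 supplies an $\eps$-uniform $L^\infty$ bound this solution extends globally in $t$, and at the very end $\mu\to0$ recovers general measurable data. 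The three substantive steps are the invariant region, the uniform estimates, and the Young-measure reduction.

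\emph{Step~1: the invariant region.} The lower shield $\rho^{\eps,\delta}\ge\delta$ is where the \emph{minimal shifted flux} is used: at an interior spatial minimum with $\rho=\delta$ one has $\rho_x=0$ and the whole convective flux $\big((\rho-\delta)u\big)_x=(\rho-\delta)u_x+u\rho_x$ vanishes, so the mass equation collapses to $\rho_t=\eps\rho_{xx}\ge0$ and the parabolic comparison principle prevents $\rho$ from dropping below $\delta$. For the Riemann-invariant bounds $C_{\inf}\le z_\delta\le w_\delta\le C_{\sup}$ I would derive the transport--diffusion equations obeyed by $w_\delta(U^{\eps,\delta})$ and $z_\delta(U^{\eps,\delta})$ and invoke the Chueh--Conley--Smoller invariant-region theorem: $\Sigma_\delta$ is cut out by level sets of $\rho,z_\delta,w_\delta$, and the admissibility of these boundary curves (their quasi-convexity in the conservative variables) follows from \textbf{(A1)--(A2)} — genuine nonlinearity being the monotonicity $g'(\rho)=(\rho^2P'(\rho))'>0$, which the vertical stiffness translation $\tilde g=g-g(\delta)$ preserves. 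The place that will demand the most care is the corner of $\Sigma_\delta$ lying on the degenerate wall $\rho=\delta$, where $\tilde c(\delta)=0$ forces $z_\delta=w_\delta$ and the two characteristic fields coalesce with the convective one; one either checks the invariant-region criterion directly along that curve or first proves invariance of the thickened region $\{\rho\ge\delta+\eta\}$ and lets $\eta\downarrow0$. This yields $\delta\le\rho^{\eps,\delta}\le M$ and $\|u^{\eps,\delta}\|_{L^\infty}\le C$ uniformly in $\eps$ and $\mu$.

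\emph{Step~2: uniform dissipation and $H^{-1}$-compactness.} Testing \eqref{eq:perturbed_system} against the gradient, in conservative variables, of the convex mechanical-energy entropy of the shifted system gives the $\eps$-uniform dissipation bound $\eps\iint_K\big(|\partial_x\rho^{\eps,\delta}|^2+|\partial_x u^{\eps,\delta}|^2\big)\,dx\,dt\le C_K$ on each compact $K$ (weighted mildly near $\rho=\delta$, precisely as the corresponding Euler estimate degenerates at vacuum). For any weak entropy pair $(\eta,q)$ of the shifted system — and these are as plentiful as in the classical case because, in the effective variables, the entropy kernel solves the \emph{homogeneous} generalized Euler--Poisson--Darboux equation — one has $\partial_t\eta(U^{\eps,\delta})+\partial_x q(U^{\eps,\delta})=\eps\,\partial_{xx}\eta-\eps\,(\partial_x U^{\eps,\delta})^{\top}\nabla^2\eta(U^{\eps,\delta})\,\partial_x U^{\eps,\delta}$; the first term tends to $0$ in $H^{-1}_{loc}$, the second is bounded in $L^1_{loc}$ (hence precompact in some $W^{-1,p}_{loc}$, $1<p<2$, by Murat's lemma), and the $L^\infty$ bounds place the whole expression in a bounded set of $W^{-1,\infty}_{loc}$; interpolation gives precompactness of $\partial_t\eta+\partial_x q$ in $H^{-1}_{loc}$.

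\emph{Step~3: Young-measure reduction and passage to the limit.} Let $\nu_{x,t}$ be the Young measure generated by $U^{\eps,\delta}$ along a subsequence $\eps\to0$; applying the div--curl lemma to two independent entropy pairs produces the Tartar commutation relation for $\nu_{x,t}$. On the fixed region $\Sigma_\delta$ the shifted system is strictly hyperbolic and genuinely nonlinear away from the single degenerate wall $\rho=\delta$, whose structure mirrors the isentropic-Euler vacuum; hence the reduction argument of DiPerna and Ding--Chen--Luo applies — using the progressing entropy waves built from the homogeneous EPD kernel, together with a WKB-type control of their fluxes near $\rho=\delta$, one shows $\supp\nu_{x,t}$ shrinks to a point for a.e.\ $(x,t)$, so $U^{\eps,\delta}\to U^\delta=(\rho^\delta,\rho^\delta u^\delta)$ strongly in $L^1_{loc}$ and a.e. Passing to the limit in the weak formulation of \eqref{eq:perturbed_system} identifies $(\rho^\delta,u^\delta)$ as a distributional solution of the $\eps=0$ system; a.e.\ convergence carries over the pointwise constraints $\rho^\delta\ge\delta$ and $(\rho^\delta,u^\delta)\in\Sigma_\delta$; and since $-\eps\,(\partial_x U^{\eps,\delta})^{\top}\nabla^2\eta^*\,\partial_x U^{\eps,\delta}\le0$ for strictly convex $\eta^*$, the limit of $\partial_t\eta^*+\partial_x q^*\le\eps\,\partial_{xx}\eta^*$ is the entropy inequality $\partial_t\eta^*+\partial_x q^*\le0$ in $\mathcal{D}'$. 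I expect the real difficulty to be concentrated at the wall $\rho=\delta$ in Steps~1 and~3: the synchronized dual translation is exactly what renders the EPD equation there \emph{homogeneous}, turning this endpoint analysis into a clean transcription of the classical isentropic-Euler vacuum theory rather than the polluted version forced by the flux-only modification of \cite{Lu2007}.
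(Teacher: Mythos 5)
Your overall strategy matches the paper's: vanishing artificial viscosity, a Chueh--Conley--Smoller invariant region built from the Riemann invariants $w_\delta,z_\delta$ and the degenerate wall $\rho=\delta$, $H^{-1}_{\rm loc}$-compactness of entropy dissipation via Murat's lemma, Tartar's commutation relation, and reduction of the Young measure. You are in some places more careful than the paper itself -- flagging the corner $\rho=\delta$ as the genuine difficulty, proposing the thickened-region-then-$\eta\downarrow0$ argument, and adding the initial-data mollification step -- and your separate maximum-principle argument for the lower shield $\rho\ge\delta$ is a clean alternative to the paper's Riemann-invariant route.

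There is, however, a concrete gap in your Step~2, and it is precisely the pitfall the paper goes out of its way to avoid. You set $U^{\eps,\delta}=(\rho^{\eps,\delta},\rho^{\eps,\delta}u^{\eps,\delta})$ and apply the parabolic regularization $\eps\,\partial_{xx}$ to these physical conservative variables (as the displayed form of \eqref{eq:perturbed_system} in the theorem statement suggests). But the shifted mechanical entropy $\eta^*$ of Section~\ref{sec:entropy} is strictly convex only in the effective variables $\hat{\mathbf{U}}=(\hat\rho,\hat m)=(\rho-\delta,(\rho-\delta)u)$; the paper explicitly remarks that its Hessian in $(\rho,\rho u)$ is generically \emph{not} positive semidefinite. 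Consequently, the identity you invoke,
\[
\partial_t\eta(U^{\eps,\delta})+\partial_x q(U^{\eps,\delta})=\eps\,\partial_{xx}\eta-\eps\,(\partial_x U^{\eps,\delta})^{\top}\nabla^2\eta\,\partial_x U^{\eps,\delta},
\]
either has $\nabla^2\eta$ taken in $(\rho,\rho u)$ (in which case the dissipation term has no sign and the $\eps$-uniform bound on $\eps\iint|\partial_x U|^2$ does not follow), or has $\nabla^2\eta$ taken in $(\hat\rho,\hat m)$ while the viscosity remains on $(\rho,\rho u)$ (in which case the equality itself is false, and the paper's own remark in Section~\ref{sec:estimates} computes the residual error to be $\eps\,\nabla\eta^*\cdot(\mathbf{U}_{\rm phys}-\hat{\mathbf{U}})_{xx}=\eps\,\delta\,u\,u_{xx}$, which is indefinite). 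The paper resolves this by placing the artificial viscosity directly on $(\hat\rho,\hat m)$, i.e.\ using system~\eqref{eq:viscous} rather than a literal parabolization of~\eqref{eq:perturbed_system}; this also simplifies your Step~1, since it is what yields the clean convective term $uu_x$ in the $w,z$ evolution equations~\eqref{eq:w_dissipation}. You should restate your viscous approximation as $\partial_t\hat{\mathbf U}+\partial_x\hat{\mathbf F}=\eps\,\partial_{xx}\hat{\mathbf U}$ before testing with $\nabla_{\hat{\mathbf U}}\eta^*$; otherwise Step~2 does not close.
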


\begin{theorem}[Global Existence and Convergence to Vacuum Solutions as $\delta \to 0$]
\label{thm:vanishing_delta}
Consider the isentropic Euler equations \eqref{eq:euler} with a general pressure law $P(\rho)$ satisfying assumptions \textbf{(A1)-(A3)} with $\gamma > 1$.
Assume the initial data $(\rho_0, u_0)$ are measurable, have finite total energy, and satisfy the uniform bounds $0 \le \rho_0(x) \le M$ and $|u_0(x)| \le M$.

Let $\{(\rho^\delta, u^\delta)\}_{\delta > 0}$ be the sequence of global weak solutions to the regularized system obtained in Theorem \ref{thm:fixed_delta}.
Then, as the regularization parameter $\delta \to 0$, there exists a subsequence converging strongly in $L^1_{loc}(\mathbb{R} \times \mathbb{R}^+)$ to a limit pair $(\rho, u)$.

The limit function $(\rho, u)$ is a global weak entropy solution to the original isentropic Euler equations \eqref{eq:euler}, satisfying:
\begin{enumerate}
    \item \textbf{Vacuum Inclusion:} The density is non-negative $\rho(x,t) \ge 0$ a.e., and the vacuum state $\{ (x,t) : \rho(x,t) = 0 \}$ is permissible;
    \item \textbf{Uniform Bounds:} The solution satisfies the universal bounds $0 \le \rho(x,t) \le M$ and $|u(x,t)| \le M$ almost everywhere;
    \item \textbf{Entropy Condition:} The entropy inequality holds in the sense of distributions for all convex weak entropy pairs vanishing at the vacuum.
\end{enumerate}
\end{theorem}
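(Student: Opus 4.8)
The plan is to remove the geometric shield by the method of compensated compactness, using the family $\{(\rho^\delta,u^\delta)\}_{\delta>0}$ supplied by \Cref{thm:fixed_delta} as the approximating sequence; the argument has three stages. \emph{Stage 1 --- uniform bounds.} By the synchronized construction, the modified Riemann invariants $z_\delta,w_\delta$ are exactly the Riemann invariants of the Euler system written in the effective density $\hat\rho=\rho-\delta$, so the invariant regions obey $\Sigma_\delta\subseteq\{0\le\rho\le M,\ |u|\le M\}$ with $M$ independent of $\delta$ (the data bounds are $\delta$-free and $\Sigma_\delta$ increases to the standard region as $\delta\to0$). Hence $0\le\rho^\delta\le M$ and $|u^\delta|\le M$ a.e., uniformly in $\delta$. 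Testing the entropy inequality of \Cref{thm:fixed_delta}(3) for the mechanical-energy pair against a space--time cutoff, and using that the modified internal energy built from $\tilde P(\cdot,\delta)$ is controlled by $e(\rho)$ uniformly in $\delta$, yields a uniform local bound on the total entropy-dissipation measure and hence a uniform local energy estimate.

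\emph{Stage 2 --- $H^{-1}_{\mathrm{loc}}$ compactness of the entropy dissipation.} Fix a $C^2$ weak entropy pair $(\eta,q)$ of \eqref{eq:euler} vanishing at $\rho=0$, and let $(\eta^\delta,q^\delta)$ be the corresponding entropy pair of the regularized system, i.e.\ the solution of the \emph{homogeneous} generalized Euler--Poisson--Darboux equation in the variable $\hat\rho=\rho-\delta$ generated from the same profile. Because the regularized system is isomorphic to the standard one in the effective variables, $\pa_t\eta^\delta(\rho^\delta,u^\delta)+\pa_x q^\delta(\rho^\delta,u^\delta)\le0$ is a locally bounded measure, uniformly in $\delta$ by Stage 1. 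It then suffices to show that the discrepancies $[\eta-\eta^\delta](\rho^\delta,u^\delta)$ and $[q-q^\delta](\rho^\delta,u^\delta)$ tend to $0$ in $L^\infty$ on the uniformly bounded state region; this follows from a WKB/singularity expansion of the entropy kernel $\chi_\delta$ that is uniformly valid up to the degenerate boundary $\rho=\delta$, giving $\norm{\chi_\delta-\chi_0}_{L^\infty}=O(\delta^\theta)$ for some $\theta>0$, together with the analogous bound for the flux kernel. Consequently the difference of the two entropy fields converges to $0$ in $W^{-1,\infty}_{\mathrm{loc}}$; combining this with the uniform measure bound and Murat's lemma (a sequence bounded in $W^{-1,\infty}_{\mathrm{loc}}$ that splits into an $H^{-1}_{\mathrm{loc}}$-precompact part plus a part bounded in the space of measures is $H^{-1}_{\mathrm{loc}}$-precompact), we obtain that $\pa_t\eta(\rho^\delta,u^\delta)+\pa_x q(\rho^\delta,u^\delta)$ lies in a compact subset of $H^{-1}_{\mathrm{loc}}(\R\times\R^+)$.

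\emph{Stage 3 --- Young measure reduction and passage to the limit.} Let $\nu_{x,t}$ be the Young measure generated by $(\rho^\delta,u^\delta)$, supported in $\{0\le\rho\le M,\ |u|\le M\}$. The div--curl lemma applied to the entropy fields of Stage 2 produces the Tartar commutation identity $\langle\nu,\eta_1 q_2-\eta_2 q_1\rangle=\langle\nu,\eta_1\rangle\langle\nu,q_2\rangle-\langle\nu,\eta_2\rangle\langle\nu,q_1\rangle$ for all weak entropy pairs $(\eta_i,q_i)$ of \eqref{eq:euler}. Since by \textbf{(A3)} the pressure is asymptotically polytropic with the same exponent $\gamma>1$ as $\rho\to0$ and $\rho\to\infty$, the entropy-kernel reduction argument for $\gamma>1$ (building on DiPerna \cite{DiPerna1983}) applies and forces $\nu_{x,t}$ to be either a single Dirac mass or concentrated on the vacuum line $\{\rho=0\}$. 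In either case $\rho^\delta$ and $(\rho u)^\delta$ converge pointwise a.e.\ along a subsequence, hence strongly in $L^1_{\mathrm{loc}}$, and $u^\delta$ converges a.e.\ on $\{\rho>0\}$. We then pass to the limit in the weak and entropy-inequality formulation of \eqref{eq:perturbed_system} with $\eps=0$: the shift terms $\delta u^\delta$ and $\delta(u^\delta)^2$ vanish in $L^\infty$, $\tilde P(\rho^\delta,\delta)\to P(\rho)$ a.e.\ by \eqref{eq:P_tilde_def} and \textbf{(A3)} (using $P(\delta),\delta P'(\delta)\to0$), and the remaining nonlinear fluxes converge by the uniform bounds together with strong convergence; the limit $(\rho,u)$ is thereby shown to be a global weak entropy solution of \eqref{eq:euler} with the asserted vacuum inclusion, uniform bounds, and entropy condition.

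\emph{Main obstacle.} The crux of the proof is the uniform-in-$\delta$ estimate in Stage 2 near the vacuum: the entropy kernel $\chi_\delta$ is only Hölder continuous across the degenerate boundary $\rho=\delta$ (exponent $\sim(\gamma-1)/2$, dictated by the indicial roots of the EPD equation), so establishing $\chi_\delta\to\chi_0$ in $L^\infty$ on the bounded state region --- despite the singular point $\rho=\delta$ moving toward the vacuum --- requires a WKB construction of $\chi_\delta$ that is genuinely uniform as $\delta\to0$. This uniformity is precisely what the homogeneous-EPD structure of the synchronized construction delivers; in the flux-modification scheme the analogous kernel solves an \emph{inhomogeneous} EPD equation whose source is singular at the degenerate boundary and cannot be controlled without the additional hypotheses on $P'''$ that our approach removes.
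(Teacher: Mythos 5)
Your Stage 1 matches the paper's Lemma \ref{lem:H_convergence} and Theorem \ref{thm:uniform_estimates_delta}, but Stages 2 and 3 diverge from the paper's argument, and the divergence exposes a gap. The paper never attempts to prove $\|\chi_\delta-\chi_0\|_{L^\infty}=O(\delta^\theta)$. Instead, Section \ref{sec:singularity_reduction} works entirely with the \emph{regularized} entropy pairs and exploits a structural fact your proposal omits: because $\tilde P'(\delta)=0$ with $\tilde P''(\delta)>0$, the regularized sound speed scales as $\tilde c(\rho)\sim(\rho-\delta)^{1/2}$, so the EPD coefficient at the degenerate boundary is \emph{locked} at $\lambda_0=1/2$ --- corresponding to $\gamma_{\mathrm{eff}}=2$ --- independently of the physical $\gamma$. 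The indicial root is $\alpha=1+2\lambda_0=2$, giving $\eta\sim\sigma^2$, $\psi\sim\sigma^3$, and the Tartar identity is then tested directly in a blow-up argument ($\mathrm{LHS}=O(\sigma^5)$ versus $\mathrm{RHS}\gtrsim\int\sigma^4\,d\nu$), forcing a Dirac mass without any kernel-comparison estimate.

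Your ``main obstacle'' paragraph shows the misconception concretely: you say $\chi_\delta$ is H\"older with ``exponent $\sim(\gamma-1)/2$, dictated by the indicial roots of the EPD equation.'' The quantity $(\gamma-1)/2$ is the Riemann-invariant weight $\theta$, not an indicial root; for a $\gamma$-gas the EPD parameter is $\lambda=\tfrac{3-\gamma}{2(\gamma-1)}$ and the nontrivial indicial root is $\tfrac{2}{\gamma-1}$. More importantly, for the regularized kernel at $\rho=\delta$ neither of these $\gamma$-dependent numbers applies --- the construction has replaced them by $\lambda_0=1/2$, $\alpha=2$, which is the whole mechanism by which the paper dispenses with Lu's $P'''$ hypothesis and treats every $\gamma>1$. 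Because $\chi_\delta$ vanishes at $\rho=\delta$ with the $\gamma=2$ rate while $\chi_0$ vanishes at $\rho=0$ with the $\gamma$-gas rate $\tfrac{2}{\gamma-1}$ (these coincide only when $\gamma=2$), the asserted $L^\infty$ convergence of kernels is exactly where a genuine proof is needed, and your proposal supplies none. Either prove the kernel estimate rigorously (controlling the local-rate mismatch, say along Chen--LeFloch lines), or adopt the paper's route and reduce with the regularized entropies themselves using the $\lambda_0=1/2$ scaling --- the latter is precisely what the synchronized-translation construction is designed to enable.
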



\section{Structural Analysis of the Perturbed System}
\label{sec:structure}

\subsection{Properties of the Weighted Pressure}
Let the original pressure $P \in C^2(0,\infty)$ satisfy the standard hyperbolicity and convexity conditions:
\begin{equation} \label{eq:pressure_assumptions}
    P'(\rho) > 0, \quad 2P'(\rho) + \rho P''(\rho) > 0, \quad \forall \rho > 0.
\end{equation}

Defining the squared sound speed for the perturbed system as $\tilde{c}^2(\rho) := \pa_\rho \tilde{P}(\rho,\delta)$, we obtain from the definition \eqref{eq:P_tilde_def}:
\begin{equation} \label{eq:c_tilde}
    \tilde{c}^2(\rho) = P'(\rho) - \frac{\delta^2 P'(\delta)}{\rho^2}.
\end{equation}

\begin{lemma}[Inherited Convexity]
\label{lem:convexity_inherit}
If $P(\rho)$ satisfies conditions \eqref{eq:pressure_assumptions}, then for all $\rho > \delta$:
\begin{enumerate}
    \item $\tilde{c}^2(\rho) > 0$ (Strict Hyperbolicity),
    \item $2\tilde{c}^2(\rho) + \rho (\tilde{c}^2)'(\rho) > 0$ (Genuine Nonlinearity).
\end{enumerate}
\end{lemma}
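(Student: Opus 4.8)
The plan is to reduce both assertions to the hypotheses on $P$ via the explicit formula \eqref{eq:c_tilde}, exploiting the fact that the correction term $\delta^2 P'(\delta)/\rho^2$ has been tailored to be transparent to the genuine-nonlinearity operator. For the first claim I would rewrite
$\tilde c^2(\rho) = \rho^{-2}\big(\rho^2 P'(\rho) - \delta^2 P'(\delta)\big) = \rho^{-2}\big(g(\rho) - g(\delta)\big)$,
where $g(\rho) := \rho^2 P'(\rho)$ is the structural stiffness from the Introduction. Differentiating gives $g'(\rho) = \rho\big(2P'(\rho) + \rho P''(\rho)\big)$, which is strictly positive on $(0,\infty)$ by assumption (A2); hence $g$ is strictly increasing, so $g(\rho) > g(\delta)$ for every $\rho > \delta$, and therefore $\tilde c^2(\rho) > 0$. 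Note that only (A2) is used here — (A1) plays no role in the lemma.

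For the second claim I would compute $(\tilde c^2)'(\rho) = P''(\rho) + 2\delta^2 P'(\delta)/\rho^3$ directly from \eqref{eq:c_tilde} and substitute into the target expression:
\[
2\tilde c^2(\rho) + \rho(\tilde c^2)'(\rho) = 2P'(\rho) - \frac{2\delta^2 P'(\delta)}{\rho^2} + \rho P''(\rho) + \frac{2\delta^2 P'(\delta)}{\rho^2} = 2P'(\rho) + \rho P''(\rho),
\]
which is strictly positive by (A2). The exact cancellation is not accidental: the operator $\mathcal{L}[f] := 2f + \rho f'$ annihilates $\rho^{-2}$, so subtracting any constant multiple of $\rho^{-2}$ from $P'$ leaves $\mathcal{L}[\tilde c^2] = \mathcal{L}[P']$ unchanged. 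This is precisely the analytic content of the claimed ``structural isomorphism with the standard Euler equations'' in terms of the effective variables, and it is why the later entropy analysis will produce the homogeneous EPD equation.

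I do not expect a genuine obstacle; the entire argument is algebraic once $P \in C^2(0,\infty)$, and the only point requiring minor care is the domain of validity. The monotonicity argument for claim~1 needs $g' > 0$ on $(\delta,\infty)$ — supplied by (A2) on all of $(0,\infty)$ — and the identity in claim~2 is pointwise in $\rho > \delta$, so the conclusions hold exactly on the asserted half-line. It is worth recording as a remark afterward that $\tilde c^2(\delta) = 0$, i.e. the acoustic degeneracy at the cut-off density, which is exactly what pins the two characteristic speeds together at $\rho = \delta$ and makes the region $\Sigma_\delta$ of Theorem~\ref{thm:fixed_delta} invariant.
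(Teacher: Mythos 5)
Your proof is correct and follows the same route as the paper's: part (1) via monotonicity of the stiffness function $g(\rho)=\rho^2 P'(\rho)$, and part (2) via direct substitution showing exact cancellation of the $\delta^2 P'(\delta)/\rho^2$ terms. The added observation that the operator $\mathcal{L}[f]=2f+\rho f'$ annihilates $\rho^{-2}$ is a nice conceptual gloss on why the cancellation is structural rather than accidental, but the underlying computation is identical to the paper's.
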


\begin{proof}
1. Define the structural stiffness function $g(\rho) = \rho^2 P'(\rho)$. We compute $g'(\rho) = 2\rho P' + \rho^2 P'' = \rho(2P' + \rho P'') > 0$. Since $g(\rho)$ is strictly increasing, for any $\rho > \delta$, we have $g(\rho) > g(\delta)$. This implies:
\[
    \rho^2 \tilde{P}'(\rho) = \rho^2 P'(\rho) - \delta^2 P'(\delta) = g(\rho) - g(\delta) > 0.
\]
Thus, $\tilde{c}^2(\rho) = \tilde{P}'(\rho) > 0$.

2. We compute the convexity term for the perturbed pressure. Using $\tilde{P}' = \tilde{c}^2$:
\begin{align*}
    2\tilde{P}'(\rho) + \rho \tilde{P}''(\rho) &= 2\left( P'(\rho) - \frac{\delta^2 P'(\delta)}{\rho^2} \right) + \rho \frac{d}{d\rho} \left( P'(\rho) - \frac{\delta^2 P'(\delta)}{\rho^2} \right) \\
    &= 2P'(\rho) - \frac{2\delta^2 P'(\delta)}{\rho^2} + \rho P''(\rho) + \rho \left( \frac{2\delta^2 P'(\delta)}{\rho^3} \right) \\
    &= (2P'(\rho) + \rho P''(\rho)) + \underbrace{\left( - \frac{2\delta^2 P'(\delta)}{\rho^2} + \frac{2\delta^2 P'(\delta)}{\rho^2} \right)}_{=0}.
\end{align*}
The singular terms cancel exactly. Since the original pressure satisfies $2P'+\rho P'' > 0$, the perturbed pressure satisfies the exact same condition. This algebraic identity confirms that the pressure perturbation acts as a pure translation in the stiffness space, preserving the convexity structure perfectly.
\end{proof}

\subsection{Eigenstructure and Riemann Invariants}

We analyze the eigenstructure of the system. The hyperbolic structure is most naturally revealed by introducing the \textbf{effective conservative variables}:
\[
\hat{\rho} = \rho - \delta,\qquad \hat{m} = \hat{\rho} u = (\rho - \delta)u.
\]
In terms of the vector $\mathbf{U}^* = (\hat{\rho}, \hat{m})^T$, the convective fluxes of the system \eqref{eq:perturbed_system} transform into the standard Euler form. Specifically, the system takes the form:
\begin{equation} \label{eq:effective_system}
    \pa_t \hat{\rho} + \pa_x \hat{m} = 0,\qquad 
    \pa_t \hat{m} + \pa_x \left( \frac{\hat{m}^2}{\hat{\rho}} + \tilde{P}(\hat{\rho}+\delta) \right) = \mathcal{O}(\delta),
\end{equation}
where the term $\mathcal{O}(\delta)$ on the RHS represents the shift in the time derivative (which does not affect the inviscid flux Jacobian structure). The principal part is structurally identical to the isentropic Euler equations for the effective fluid.

The Jacobian matrix of the flux $\mathbf{F}^*(\mathbf{U}^*) = (\hat{m}, \hat{m}^2/\hat{\rho} + \tilde{P})^T$ is:
\[
A(\mathbf{U}^*) = \begin{pmatrix}
0 & 1 \\
\tilde{c}^2 - u^2 & 2u
\end{pmatrix}.
\]
Solving the characteristic equation $\det(A-\lambda I)=0$ yields the eigenvalues:
\begin{equation}
    \lambda_{1,2} = u \pm \tilde{c}(\rho).
\end{equation}

\subsubsection*{Degeneracy at the Boundary}
At the boundary $\rho = \delta$ (i.e., $\hat{\rho}=0$), we have $\tilde{c}(\delta)=0$ by construction. Consequently:
\[
\lim_{\rho\to\delta} \lambda_{1,2}(\rho, u) = u.
\]
Thus, both characteristic speeds coincide with the fluid velocity. This makes the boundary $\rho=\delta$ a characteristic surface (contact discontinuity), preventing any convective flux from crossing it.  This geometric mechanism effectively shields the solution from the vacuum.

\subsubsection*{Riemann Invariants}
The Riemann invariants satisfy the differential relation $du = \pm \frac{\tilde{c}}{\hat{\rho}} d\hat{\rho} = \pm \frac{\tilde{c}}{\rho-\delta} d\rho$. Integrating from the boundary $\rho=\delta$, we define:
\begin{equation} \label{eq:riemann_invariants}
    w(\rho,u) = u + \int_{\delta}^{\rho} \frac{\tilde{c}(s)}{s-\delta} \, ds,\qquad
    z(\rho,u) = u - \int_{\delta}^{\rho} \frac{\tilde{c}(s)}{s-\delta} \, ds.
\end{equation}
The integral is well-defined because near $s=\delta$, the sound speed scales as $\tilde{c}(s)\sim\sqrt{s-\delta}$ (since $\tilde{P}'(s) \approx \text{const} \cdot (s-\delta)$), making the integrand behave like $(s-\delta)^{-1/2}$, which is integrable.

\begin{remark}[Consistency Limit]
\label{rem:consistency}
As $\delta \to 0$, the shift vanishes ($\hat{\rho} \to \rho$), and the perturbed invariants converge to the classical ones:
\[
\lim_{\delta \to 0} \int_{\delta}^{\rho} \frac{\tilde{c}(s)}{s-\delta} \, ds = \int_{0}^{\rho} \frac{\sqrt{P'(s)}}{s} \, ds.
\]
This ensures that the invariant regions we construct later will converge to bounded regions for the original Euler system.
\end{remark}

\begin{proposition}[Explicit Form for Polytropic Gas]
For the polytropic law $P(\rho) = \kappa\rho^\gamma$, the Riemann invariants are given by:
\begin{equation}\label{eq:riemann_explicit_corrected}
    w(\rho,u) = u + \sqrt{\kappa\gamma} \int_{\delta}^{\rho} \frac{\sqrt{s^{\gamma+1} - \delta^{\gamma+1}}}{s(s-\delta)} \, ds.
\end{equation}
This integral is finite for $\rho \ge \delta$ and recovers the standard form $\frac{2\sqrt{\kappa\gamma}}{\gamma-1}\rho^{(\gamma-1)/2}$ in the limit $\delta \to 0$.
\end{proposition}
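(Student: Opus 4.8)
The plan is to treat the three assertions of the proposition in turn: the closed form of the integrand, its integrability for $\rho\ge\delta$, and the identification of the $\delta\to0$ limit. Steps one and two are mere bookkeeping. For the explicit form I would substitute $P(\rho)=\kappa\rho^\gamma$, hence $P'(\rho)=\kappa\gamma\rho^{\gamma-1}$, into the perturbed sound speed \eqref{eq:c_tilde}:
\[
\tilde c^2(\rho)=\kappa\gamma\rho^{\gamma-1}-\frac{\delta^2\,\kappa\gamma\,\delta^{\gamma-1}}{\rho^2}=\kappa\gamma\,\frac{\rho^{\gamma+1}-\delta^{\gamma+1}}{\rho^2}.
\]
Since $\rho\ge\delta>0$, Lemma~\ref{lem:convexity_inherit} guarantees the numerator is nonnegative, so $\tilde c(\rho)=\sqrt{\kappa\gamma}\,\rho^{-1}\sqrt{\rho^{\gamma+1}-\delta^{\gamma+1}}$; dividing by $\rho-\delta$ and inserting into the definition \eqref{eq:riemann_invariants} of $w$ produces exactly \eqref{eq:riemann_explicit_corrected} (the formula for $z$ is identical with the integral subtracted). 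For finiteness, note that on $[\delta,\rho]$ the factor $1/s$ is bounded, so the only candidate singularity is at $s=\delta$; writing $s^{\gamma+1}-\delta^{\gamma+1}=\int_\delta^s(\gamma+1)t^\gamma\,dt$ gives $(\gamma+1)\delta^\gamma(s-\delta)\le s^{\gamma+1}-\delta^{\gamma+1}\le(\gamma+1)s^\gamma(s-\delta)$ near $s=\delta$, so the integrand is $O\big((s-\delta)^{-1/2}\big)$, which is integrable. This reproves, in the polytropic case, the integrability noted after \eqref{eq:riemann_invariants}, so $w(\rho,u)$ is finite for every $\rho\ge\delta$.

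For the vacuum limit I would split, for $0<\delta<\rho/2$, the integral at $s=2\delta$. On the inner layer $[\delta,2\delta]$ the bound just obtained gives
\[
\int_\delta^{2\delta}\frac{\sqrt{s^{\gamma+1}-\delta^{\gamma+1}}}{s(s-\delta)}\,ds\le C_\gamma\,\delta^{\gamma/2-1}\int_\delta^{2\delta}(s-\delta)^{-1/2}\,ds=C_\gamma'\,\delta^{(\gamma-1)/2}\xrightarrow[\delta\to0]{}0,
\]
where $\gamma>1$ is used for the first time. On the outer region $[2\delta,\rho]$ one has $s-\delta\ge s/2$ and $s^{\gamma+1}-\delta^{\gamma+1}\le s^{\gamma+1}$, so the integrand is dominated by $2s^{(\gamma-3)/2}\in L^1(0,\rho)$ (integrable again precisely because $\gamma>1$) and converges pointwise to $s^{(\gamma-3)/2}$; dominated convergence then yields $\int_{2\delta}^\rho(\cdots)\,ds\to\int_0^\rho s^{(\gamma-3)/2}\,ds=\tfrac{2}{\gamma-1}\rho^{(\gamma-1)/2}$. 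Multiplying by $\sqrt{\kappa\gamma}$ recovers $\tfrac{2\sqrt{\kappa\gamma}}{\gamma-1}\rho^{(\gamma-1)/2}$, consistent with Remark~\ref{rem:consistency}.

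The one delicate point — and the main obstacle — is the interchange of limit and integral: since the integrand blows up precisely at the moving lower endpoint $s=\delta$, a direct dominated-convergence argument on all of $[\delta,\rho]$ does not apply. The boundary-layer decomposition at $s=2\delta$, extracting a vanishing $O(\delta^{(\gamma-1)/2})$ contribution from the layer and invoking dominated convergence only on the region bounded away from $\delta$, is exactly what makes the passage rigorous; it is also where the hypothesis $\gamma>1$ is genuinely needed, both for the layer estimate and for the $L^1$-domination of the limiting integrand $s^{(\gamma-3)/2}$.
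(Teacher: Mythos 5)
Your argument is correct, and it is essentially the same computation the paper relies on: the substitution $P'(\rho)=\kappa\gamma\rho^{\gamma-1}$ into \eqref{eq:c_tilde} and then into \eqref{eq:riemann_invariants} is immediate, and your boundary-layer split at $s=2\delta$ for the $\delta\to0$ limit matches the paper's own sketch for the analogous estimate in Lemma~\ref{lem:H_convergence}. The paper states this proposition without a displayed proof, but your treatment (bounding $s^{\gamma+1}-\delta^{\gamma+1}$ between $(\gamma+1)\delta^\gamma(s-\delta)$ and $(\gamma+1)s^\gamma(s-\delta)$, extracting $O(\delta^{(\gamma-1)/2})$ from the inner layer, and applying dominated convergence on $[2\delta,\rho]$ with dominating function $2s^{(\gamma-3)/2}$) is exactly the rigorous version of what the paper leaves implicit, and you correctly identify $\gamma>1$ as the hypothesis that makes both pieces work.
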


\begin{remark}[Structural Superiority over Ad-hoc Modification]
It is crucial to contrast our system with the regularization in \cite{Lu2007}.
\begin{itemize}
    \item \textbf{Lu's Approach:} Modifies flux to $\rho u - 2\delta u$ but uses a pressure perturbation $P_1$ that does not fully synchronize with the flux shift in the momentum equation. This leads to singular error terms in the entropy analysis.
    \item \textbf{Our Approach:} By employing the effective variables, we ensure that the system is structurally isomorphic to the Euler equations. This guarantees that the Riemann invariants and entropy pairs maintain their algebraic structure, allowing for precise singularity analysis.
\end{itemize}
\end{remark}

\section{Entropy Analysis}
\label{sec:entropy}

The application of the compensated compactness method necessitates a strictly convex entropy pair $(\eta, q)$ satisfying the entropy identity $\pa_t \eta + \pa_x q = 0$ for smooth solutions. Due to the flux shift modification, the standard mechanical energy is not convex with respect to the physical variables. 

To resolve this and ensure rigor, we perform the entropy analysis entirely in terms of the effective conservative variables. This approach exploits the structural isomorphism between our regularized system and the standard Euler equations.

\subsection{Construction of the Shifted Mechanical Energy in Effective Variables}

We define the effective density $\hat{\rho}$ and effective momentum $\hat{m}$ as:
\begin{equation}
    \hat{\rho} := \rho - \delta, \quad \hat{m} := (\rho - \delta)u = \hat{\rho} u.
\end{equation}
In terms of the vector $\hat{\mathbf{U}} = (\hat{\rho}, \hat{m})^T$, the inviscid part of the regularized system is governed by:
\begin{equation} \label{eq:effective_euler_system}
    \pa_t \hat{\rho} + \pa_x \hat{m} = 0, \quad \pa_t \hat{m} + \pa_x \left( \frac{\hat{m}^2}{\hat{\rho}} + \tilde{P}(\hat{\rho}+\delta) \right) = 0.
\end{equation}
This system is formally identical to the isentropic Euler equations. Accordingly, we define the thermodynamic potential relative to the effective density. The specific internal energy $\hat{e}(\hat{\rho})$ satisfies the fundamental thermodynamic relation $d\hat{e} = - \tilde{P} \, d(1/\hat{\rho}) = \frac{\tilde{P}}{\hat{\rho}^2} d\hat{\rho}$. Integrating from the vacuum (relative to effective density), we define:
\begin{equation} \label{eq:shifted_internal_energy}
    \hat{e}(\hat{\rho}) = \int_{0}^{\hat{\rho}} \frac{\tilde{P}(s+\delta)}{s^2} \, ds.
\end{equation}
The shifted mechanical entropy $\eta^*$ and the corresponding entropy flux $q^*$ are then defined naturally as:
\begin{equation} \label{eq:shifted_entropy_def}
    \eta^*(\hat{\rho}, \hat{m}) = \frac{\hat{m}^2}{2\hat{\rho}} + \hat{\rho} \hat{e}(\hat{\rho}),
\end{equation}
\begin{equation} \label{eq:shifted_flux_def}
    q^*(\hat{\rho}, \hat{m}) = u(\eta^* + \tilde{P}) = \frac{\hat{m}}{\hat{\rho}} \left( \eta^* + \tilde{P}(\hat{\rho}+\delta) \right).
\end{equation}

\begin{proof}[Verification of the Entropy Identity]
We rigorously verify the compatibility relation $\pa_t \eta^* + \pa_x q^* = 0$ using the effective system \eqref{eq:effective_euler_system}. 
First, we compute the partial derivatives of $\eta^*$ with respect to the conservative variables $\hat{\mathbf{U}} = (\hat{\rho}, \hat{m})$.
Using the relation $d(\hat{\rho}\hat{e}) = \hat{e}d\hat{\rho} + \hat{\rho}d\hat{e} = (\hat{e} + \tilde{P}/\hat{\rho})d\hat{\rho}$:
\begin{align*}
    \eta^*_{\hat{m}} &= \frac{\hat{m}}{\hat{\rho}} = u, \\
    \eta^*_{\hat{\rho}} &= -\frac{\hat{m}^2}{2\hat{\rho}^2} + \frac{d}{d\hat{\rho}}(\hat{\rho}\hat{e}) = -\frac{1}{2}u^2 + \hat{e} + \frac{\tilde{P}}{\hat{\rho}}.
\end{align*}
The temporal evolution of the entropy is:
\[
\pa_t \eta^* = \eta^*_{\hat{\rho}} \pa_t \hat{\rho} + \eta^*_{\hat{m}} \pa_t \hat{m}.
\]
Substituting the equations of motion $\pa_t \hat{\rho} = -\pa_x \hat{m}$ and $\pa_t \hat{m} = -\pa_x (\hat{m}u + \tilde{P})$:
\begin{align} \label{eq:entropy_time_deriv}
    \pa_t \eta^* &= \left( -\frac{1}{2}u^2 + \hat{e} + \frac{\tilde{P}}{\hat{\rho}} \right) (-\pa_x \hat{m}) + u \left[ -\pa_x(\hat{m}u) - \pa_x \tilde{P} \right] \nonumber \\
    &= \pa_x \hat{m} \left( \frac{1}{2}u^2 - \hat{e} - \frac{\tilde{P}}{\hat{\rho}} \right) - u^2 \pa_x \hat{m} - \hat{m}u \pa_x u - u \pa_x \tilde{P} \nonumber \\
    &= \pa_x \hat{m} \left( -\frac{1}{2}u^2 - \hat{e} - \frac{\tilde{P}}{\hat{\rho}} \right) - \hat{m}u \pa_x u - u \pa_x \tilde{P}.
\end{align}
Next, we expand the spatial derivative of the flux $q^* = u \eta^* + u \tilde{P}$:
\begin{align} \label{eq:entropy_flux_deriv}
    \pa_x q^* &= \pa_x u (\eta^* + \tilde{P}) + u (\pa_x \eta^* + \pa_x \tilde{P}) \nonumber \\
    &= \pa_x u \left( \frac{1}{2}\hat{\rho}u^2 + \hat{\rho}\hat{e} + \tilde{P} \right) + u \left( \eta^*_{\hat{\rho}} \pa_x \hat{\rho} + \eta^*_{\hat{m}} \pa_x \hat{m} + \pa_x \tilde{P} \right) \nonumber \\
    &= \pa_x u \left( \frac{1}{2}\hat{\rho}u^2 + \hat{\rho}\hat{e} + \tilde{P} \right) + u \left[ \left(-\frac{1}{2}u^2 + \hat{e} + \frac{\tilde{P}}{\hat{\rho}}\right)\pa_x \hat{\rho} + u \pa_x \hat{m} + \pa_x \tilde{P} \right].
\end{align}
Summing \eqref{eq:entropy_time_deriv} and \eqref{eq:entropy_flux_deriv}, we group terms by physical significance.
The terms involving $\pa_x \tilde{P}$ cancel: $-u \pa_x \tilde{P} + u \pa_x \tilde{P} = 0$.
The terms involving thermodynamic potentials ($\hat{e} + \tilde{P}/\hat{\rho}$) are:
\[
-\pa_x \hat{m} \left(\hat{e} + \frac{\tilde{P}}{\hat{\rho}}\right) + u \pa_x \hat{\rho} \left(\hat{e} + \frac{\tilde{P}}{\hat{\rho}}\right) + \pa_x u (\hat{\rho}\hat{e} + \tilde{P}).
\]
Using $\pa_x \hat{m} = \hat{\rho}\pa_x u + u\pa_x \hat{\rho}$, the first term becomes $-(\hat{\rho}\pa_x u + u\pa_x \hat{\rho})(\hat{e} + \tilde{P}/\hat{\rho})$. This cancels exactly with the other two terms.
The remaining terms involving kinetic energy ($u^2$) are:
\[
\pa_x \hat{m} \left(-\frac{1}{2}u^2\right) - \hat{m}u \pa_x u + \frac{1}{2}\hat{\rho}u^2 \pa_x u + u \pa_x \hat{\rho} \left(-\frac{1}{2}u^2\right) + u^2 \pa_x \hat{m}.
\]
Simplifying: $-\frac{1}{2}u^2 \pa_x \hat{m} + u^2 \pa_x \hat{m} = \frac{1}{2}u^2 \pa_x \hat{m}$.
Substituting $\pa_x \hat{m} = \hat{\rho}u_x + u\hat{\rho}_x$, the sum is identically zero.
Thus, $\pa_t \eta^* + \pa_x q^* = 0$.
\end{proof}

\subsection{Strict Convexity of the Shifted Entropy}

Strict convexity of $\eta^*$ with respect to the conservative variables $\hat{\mathbf{U}} = (\hat{\rho}, \hat{m})$ is essential for the compensated compactness argument.

\begin{theorem}[Strict Convexity]
\label{thm:entropy_convexity}
Assume the perturbed pressure $\tilde{P}$ satisfies the genuine nonlinearity condition $2\tilde{P}'(\rho) + \rho\tilde{P}''(\rho) > 0$ for $\rho > \delta$. Then, the shifted entropy $\eta^*(\hat{\rho}, \hat{m})$ is strictly convex for all $\hat{\rho} > 0$.
\end{theorem}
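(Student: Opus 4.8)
The plan is to leverage the structural isomorphism from Section~\ref{sec:structure}: $\eta^*$ is precisely the classical mechanical energy of the \emph{effective} isentropic fluid with conservative variables $(\hat\rho,\hat m)$ and pressure law $\hat P(\hat\rho):=\tilde P(\hat\rho+\delta)$, and for such an energy the Hessian with respect to the conservative variables is positive definite exactly when the effective sound speed squared is positive. Since passing to the effective variables removes the flux shift from the principal part without introducing cross terms, the proof reduces to a direct computation of the Hessian of $\eta^*$ with respect to $(\hat\rho,\hat m)$ together with one invocation of Lemma~\ref{lem:convexity_inherit}. It is worth stressing at the outset why the statement is not vacuous: in the \emph{physical} variables $(\rho,\rho u)$ the shifted flux $(\rho-\delta)u$ spoils the usual energy structure and $\eta^*$ is \emph{not} convex, so the theorem is really the assertion that convexity is restored once one adopts $(\hat\rho,\hat m)$ as the unknowns.

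First I would record the thermodynamic bookkeeping: from \eqref{eq:shifted_internal_energy} one has $\hat{e}'(\hat\rho)=\tilde P(\hat\rho+\delta)/\hat\rho^2=\hat P(\hat\rho)/\hat\rho^2$, and differentiating gives $\hat{P}'(\hat\rho)=\tilde c^2(\hat\rho+\delta)$. Moreover $\hat e$ is finite (indeed $C^1$) up to $\hat\rho=0$, because $\tilde P(\delta)=0$ and $\tilde c^2(\delta)=\tilde{P}'(\delta)=0$ force $\tilde P(\hat\rho+\delta)=\mathcal O(\hat\rho^2)$ near the boundary, so $\eta^*$ itself is well defined on $\{\hat\rho>0\}$. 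Then, using $u=\hat m/\hat\rho$ and $d(\hat\rho\hat e)=(\hat e+\hat P/\hat\rho)\,d\hat\rho$, a direct differentiation of \eqref{eq:shifted_entropy_def} gives $\eta^*_{\hat m}=u$ and $\eta^*_{\hat\rho}=-\tfrac12 u^2+\hat e+\hat P/\hat\rho$, and a second differentiation collapses to
\begin{equation*}
 D^2\eta^*=\begin{pmatrix}
 \dfrac{u^2+\hat{P}'(\hat\rho)}{\hat\rho} & -\dfrac{u}{\hat\rho}\\[2mm]
 -\dfrac{u}{\hat\rho} & \dfrac{1}{\hat\rho}
 \end{pmatrix},
\end{equation*}
where the $\hat e$ and $\hat P/\hat\rho$ contributions to $\eta^*_{\hat\rho\hat\rho}$ cancel the extra $\hat P/\hat\rho^2$ term exactly. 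From here $\eta^*_{\hat m\hat m}=1/\hat\rho>0$, $\operatorname{tr}D^2\eta^*=(1+u^2+\hat{P}'(\hat\rho))/\hat\rho>0$, and $\det D^2\eta^*=\hat{P}'(\hat\rho)/\hat\rho^2=\tilde c^2(\hat\rho+\delta)/\hat\rho^2$. By Sylvester's criterion $D^2\eta^*$ is positive definite for every $\hat\rho>0$ as soon as $\tilde c^2(\hat\rho+\delta)>0$, which is conclusion~(1) of Lemma~\ref{lem:convexity_inherit} applied at the point $\rho=\hat\rho+\delta>\delta$. This establishes strict convexity on $\{\hat\rho>0\}$.

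I expect no serious obstacle here — the content is algebraic — but two points merit care in the write-up. First is the bookkeeping of hypotheses: only strict hyperbolicity $\tilde c^2>0$ is actually consumed, while the full genuine-nonlinearity condition $2\tilde{P}'+\rho\tilde{P}''>0$ in the statement is included because it is what Lemma~\ref{lem:convexity_inherit} delivers and because it is needed downstream (for the kinetic/EPD construction of $q^*$ and for genuine nonlinearity of the approximate system), not for convexity per se. Second is the boundary behaviour: since $\tilde c^2(\rho)\sim \text{const}\cdot(\rho-\delta)$ as $\rho\downarrow\delta$, one has $\det D^2\eta^*\sim\text{const}/\hat\rho\to\infty$, so the convexity is uniform on compact subsets of $\{\hat\rho>0\}$ but there is no two-sided bound on $D^2\eta^*$ up to $\hat\rho=0$. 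This is exactly the analytic fingerprint of the acoustic degeneracy engineered at $\rho=\delta$, and it is harmless for the compensated compactness argument, which only invokes convexity strictly away from the shield; I would simply note it rather than attempt any uniform estimate.
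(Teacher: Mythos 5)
Your proof is correct and follows essentially the same route as the paper's: compute $\eta^*_{\hat m\hat m}=1/\hat\rho$, $\eta^*_{\hat\rho\hat m}=-u/\hat\rho$, $\eta^*_{\hat\rho\hat\rho}=(u^2+\tilde P'(\hat\rho+\delta))/\hat\rho$, observe the cancellation of the $\hat e$ and $\tilde P/\hat\rho$ contributions, and conclude $\det\nabla^2\eta^*=\tilde P'(\hat\rho+\delta)/\hat\rho^2>0$ by Sylvester. Your supplementary observations---that only strict hyperbolicity $\tilde c^2>0$ (Lemma~\ref{lem:convexity_inherit}(1)) is actually consumed, and that $\det\nabla^2\eta^*\to\infty$ as $\hat\rho\downarrow0$ so convexity is not two-sided-uniform up to the shield---are accurate refinements but do not change the argument.
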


\begin{proof}
We compute the Hessian matrix $\nabla^2 \eta^*$ explicitly in terms of $(\hat{\rho}, \hat{m})$.
Recall $\eta^*_{\hat{m}} = \hat{m}/\hat{\rho}$. The first derivatives are:
\[
    \eta^*_{\hat{m}\hat{m}} = \frac{\pa}{\pa \hat{m}}\left(\frac{\hat{m}}{\hat{\rho}}\right) = \frac{1}{\hat{\rho}}.
\]
For $\hat{\rho} > 0$ (i.e., $\rho > \delta$), we have $\eta^*_{\hat{m}\hat{m}} > 0$.
The mixed derivative is:
\[
    \eta^*_{\hat{\rho}\hat{m}} = \frac{\pa}{\pa \hat{\rho}}\left(\frac{\hat{m}}{\hat{\rho}}\right) = -\frac{\hat{m}}{\hat{\rho}^2} = -\frac{u}{\hat{\rho}}.
\]
To compute $\eta^*_{\hat{\rho}\hat{\rho}}$, recall $\eta^*_{\hat{\rho}} = -\frac{1}{2}u^2 + \hat{e} + \tilde{P}/\hat{\rho}$. Note that $\tilde{P}$ here denotes $\tilde{P}(\hat{\rho}+\delta)$.
Using $\pa_{\hat{\rho}} u = \pa_{\hat{\rho}}(\hat{m}/\hat{\rho}) = -u/\hat{\rho}$:
\begin{align*}
    \eta^*_{\hat{\rho}\hat{\rho}} &= -u \left(-\frac{u}{\hat{\rho}}\right) + \frac{d}{d\hat{\rho}}\left( \hat{e} + \frac{\tilde{P}}{\hat{\rho}} \right) \\
    &= \frac{u^2}{\hat{\rho}} + \left( \frac{\tilde{P}}{\hat{\rho}^2} + \frac{\tilde{P}'\hat{\rho} - \tilde{P}}{\hat{\rho}^2} \right) \\
    &= \frac{u^2}{\hat{\rho}} + \frac{\tilde{P}'(\hat{\rho}+\delta)}{\hat{\rho}}.
\end{align*}
The determinant of the Hessian is:
\begin{align*}
    \det(\nabla^2 \eta^*) &= \eta^*_{\hat{\rho}\hat{\rho}} \eta^*_{\hat{m}\hat{m}} - (\eta^*_{\hat{\rho}\hat{m}})^2 \\
    &= \left( \frac{u^2}{\hat{\rho}} + \frac{\tilde{P}'}{\hat{\rho}} \right) \frac{1}{\hat{\rho}} - \left( -\frac{u}{\hat{\rho}} \right)^2 \\
    &= \frac{u^2 + \tilde{P}'}{\hat{\rho}^2} - \frac{u^2}{\hat{\rho}^2} \\
    &= \frac{\tilde{P}'(\hat{\rho}+\delta)}{\hat{\rho}^2}.
\end{align*}
Since the perturbed pressure satisfies the strict hyperbolicity condition $\tilde{P}'(\rho) = \tilde{c}^2(\rho) > 0$ for $\rho > \delta$ (implied by the convexity condition), the determinant is strictly positive.
Thus, the Hessian is positive definite, confirming that $\eta^*$ is strictly convex with respect to the effective variables.
\end{proof}

\begin{remark}[Structural Choice of Variables]
It is important to emphasize that strict convexity holds for the pair $(\hat{\rho}, \hat{m})$ but \textbf{not} for the physical pair $(\rho, m=\rho u)$. If one were to use physical momentum, the Hessian determinant would generally fail to be positive definite. The choice of effective variables is therefore mandated by the mathematical structure of the regularization to satisfy the requirements of the compensated compactness theory.
\end{remark}


\section{Viscous Regularization and A Priori Estimates}
\label{sec:estimates}

We establish uniform $L^\infty$ estimates for the approximate solutions constructed via the method of vanishing artificial viscosity. 

To ensure the structural consistency required by the compensated compactness method, specifically to guarantee the strict validity of the entropy inequalities without singular error terms, we apply the artificial viscosity directly to the \textbf{effective conservative variables} $\hat{\mathbf{U}} = (\hat{\rho}, \hat{m})$.

Consider the regularized system with viscosity coefficient $\varepsilon > 0$:
\begin{equation}\label{eq:viscous}
\left\{
\begin{aligned}
    & \pa_t \hat{\rho} + \pa_x \hat{m} = \varepsilon \pa_{xx} \hat{\rho}, \\
    & \pa_t \hat{m} + \pa_x \left( \frac{\hat{m}^2}{\hat{\rho}} + \tilde{P}(\hat{\rho}+\delta) \right) = \varepsilon \pa_{xx} \hat{m}.
\end{aligned}
\right.
\end{equation}
In terms of the physical variables $(\rho, u)$, noting that $\hat{\rho} = \rho - \delta$ and $\hat{m} = (\rho-\delta)u$, this system takes the explicit form:
\begin{equation}\label{eq:viscous_physical}
\left\{
\begin{aligned}
    & \rho_t + \left( (\rho-\delta) u \right)_x = \varepsilon\rho_{xx},\\
    & ((\rho-\delta) u)_t + \left( (\rho-\delta) u^2 + \tilde{P}(\rho) \right)_x = \varepsilon((\rho-\delta) u)_{xx}.
\end{aligned}
\right.
\end{equation}
\begin{remark}[Geometric Interpretation of Regularization]
By adding dissipation to the effective variables, we are effectively performing the regularization in the ``shifted'' phase space. This ensures that the viscous system retains the exact structural isomorphism with the Navier-Stokes equations for the effective fluid, thereby avoiding the structural mismatch errors that arise when applying physical viscosity $\varepsilon(\rho u)_{xx}$ to a flux-shifted system.
\end{remark}

\subsection{Evolution of Riemann Invariants}

To apply the invariant region principle, we derive the parabolic evolution equations for the Riemann invariants $z, w$. Recall that in terms of effective variables, the invariants are:
\begin{equation}
    w = u + \int_{\delta}^{\rho} \frac{\tilde{c}(s)}{s-\delta} \, ds, \quad z = u - \int_{\delta}^{\rho} \frac{\tilde{c}(s)}{s-\delta} \, ds.
\end{equation}

First, we derive the evolution equation for the velocity $u$. Expanding the effective momentum equation $\hat{m}_t + (\hat{m}u + \tilde{P})_x = \varepsilon \hat{m}_{xx}$ and using $\hat{m} = \hat{\rho} u$:
\begin{align*}
    \hat{\rho} u_t + u \hat{\rho}_t + \hat{m}_x u + \hat{m} u_x + \tilde{P}_x &= \varepsilon (\hat{\rho} u)_{xx} \\
    &= \varepsilon (\hat{\rho} u_{xx} + 2\hat{\rho}_x u_x + u \hat{\rho}_{xx}).
\end{align*}
Substituting the mass equation $\hat{\rho}_t = -\hat{m}_x + \varepsilon \hat{\rho}_{xx}$:
\[
    \hat{\rho} u_t + u (-\hat{m}_x + \varepsilon \hat{\rho}_{xx}) + \hat{m}_x u + \hat{m} u_x + \tilde{P}_x = \varepsilon \hat{\rho} u_{xx} + 2\varepsilon \hat{\rho}_x u_x + \varepsilon u \hat{\rho}_{xx}.
\]
The terms involving $\hat{m}_x u$ and $\varepsilon u \hat{\rho}_{xx}$ cancel exactly. We are left with:
\[
    \hat{\rho} u_t + \hat{m} u_x + \tilde{P}_x = \varepsilon \hat{\rho} u_{xx} + 2\varepsilon \hat{\rho}_x u_x.
\]
Dividing by $\hat{\rho} = \rho-\delta$ and noting $\hat{\rho}_x = \rho_x$:
\begin{equation}\label{eq:u_viscous}
    u_t + u u_x + \frac{\tilde{P}'(\rho)}{\rho-\delta}\rho_x = \varepsilon u_{xx} + \frac{2\varepsilon}{\rho-\delta}\rho_x u_x.
\end{equation}
\textbf{Note:} Unlike flux-modification methods which introduce a drift velocity coefficient $(1-\delta/\rho)$, our effective viscosity formulation yields the standard convective term $u u_x$, significantly simplifying the characteristic analysis.

Now we compute the evolution of $w_t + \lambda_2 w_x$, where $\lambda_2 = u + \tilde{c}$. Using $dw = du + \frac{\tilde{c}}{\rho-\delta} d\rho$:
\begin{align*}
    w_t + \lambda_2 w_x &= (u_t + \lambda_2 u_x) + \frac{\tilde{c}}{\rho-\delta}(\rho_t + \lambda_2 \rho_x) \\
    &= \left( \text{Eq. \ref{eq:u_viscous}} + \lambda_2 u_x \right) + \frac{\tilde{c}}{\rho-\delta} \left( \varepsilon\rho_{xx} - (\hat{\rho} u)_x + \lambda_2\rho_x \right).
\end{align*}
Grouping the convective terms (first-order derivatives):
\begin{align*}
    \text{Conv} &= u u_x + \frac{\tilde{c}^2}{\rho-\delta}\rho_x + \lambda_2 u_x + \frac{\tilde{c}}{\rho-\delta} \left( -(\rho-\delta)u_x - u \rho_x + (u+\tilde{c})\rho_x \right) \\
    &= u u_x + \frac{\tilde{c}^2}{\rho-\delta}\rho_x + (u+\tilde{c})u_x - \tilde{c} u_x - \frac{\tilde{c} u}{\rho-\delta}\rho_x + \frac{\tilde{c} u}{\rho-\delta}\rho_x + \frac{\tilde{c}^2}{\rho-\delta}\rho_x \\
    &= (u+\tilde{c})u_x + (u+\tilde{c}) \frac{\tilde{c}}{\rho-\delta}\rho_x \\
    &= \lambda_2 \left( u_x + \frac{\tilde{c}}{\rho-\delta}\rho_x \right) = \lambda_2 w_x.
\end{align*}
Wait, the LHS is $w_t + \lambda_2 w_x$, so the convective terms on the RHS effectively cancel the $\lambda_2 w_x$ transport, leaving pure diffusion. Let us re-arrange to the parabolic form $w_t + \lambda_2 w_x = \text{Dissipation}$.
The viscous terms are:
\begin{align*}
    \text{Visc} &= \varepsilon u_{xx} + \frac{2\varepsilon}{\rho-\delta}\rho_x u_x + \frac{\tilde{c}}{\rho-\delta}\varepsilon \rho_{xx} \\
    &= \varepsilon \left( u_{xx} + \frac{\tilde{c}}{\rho-\delta}\rho_{xx} \right) + \frac{2\varepsilon}{\rho-\delta}\rho_x u_x.
\end{align*}
Using the identity $w_{xx} = u_{xx} + \frac{\tilde{c}}{\rho-\delta}\rho_{xx} + \left(\frac{\tilde{c}}{\rho-\delta}\right)' \rho_x^2$, we substitute $u_{xx} + \frac{\tilde{c}}{\rho-\delta}\rho_{xx} = w_{xx} - \left(\frac{\tilde{c}}{\rho-\delta}\right)' \rho_x^2$:
\begin{align*}
    \text{Visc} &= \varepsilon w_{xx} - \varepsilon \left(\frac{\tilde{c}}{\rho-\delta}\right)' \rho_x^2 + \frac{2\varepsilon}{\rho-\delta}\rho_x \left( w_x - \frac{\tilde{c}}{\rho-\delta}\rho_x \right) \\
    &= \varepsilon w_{xx} + \frac{2\varepsilon}{\rho-\delta}\rho_x w_x - \varepsilon \underbrace{\left[ \left( \frac{\tilde{c}}{\rho-\delta} \right)' + \frac{2\tilde{c}}{(\rho-\delta)^2} \right]}_{K(\rho)} \rho_x^2.
\end{align*}
The evolution equation is strictly:
\begin{equation} \label{eq:w_dissipation}
    w_t + \lambda_2 w_x = \varepsilon w_{xx} + \frac{2\varepsilon}{\rho-\delta}\rho_x w_x - \varepsilon K(\rho) \rho_x^2.
\end{equation}
The dissipation coefficient is $K(\rho) = \frac{d}{d\rho}(\frac{\tilde{c}}{\rho-\delta}) + \frac{2\tilde{c}}{(\rho-\delta)^2} = \frac{\tilde{c}'(\rho-\delta) - \tilde{c} + 2\tilde{c}}{(\rho-\delta)^2} = \frac{(\tilde{c}(\rho-\delta))'}{(\rho-\delta)^2}$. Since $\tilde{c}$ behaves like $(\rho-\delta)^{1/2}$ near the boundary and grows at infinity, $K(\rho)$ is positive for convex pressures. Thus, the invariant regions are preserved.

\subsection{Strict Entropy Dissipation}

We now derive the entropy inequality. This step highlights the advantage of our synchronized regularization: the entropy inequality holds \textbf{exactly} without pollution terms.

Multiplying the effective viscous system \eqref{eq:viscous} by the gradient of the shifted entropy $\nabla_{\hat{\mathbf{U}}} \eta^*(\hat{\mathbf{U}})$, we obtain:
\begin{equation}\label{eq:entropy_identity}
    \partial_t \eta^*(\hat{\mathbf{U}}) + \partial_x q^*(\hat{\mathbf{U}}) = \varepsilon \nabla_{\hat{\mathbf{U}}} \eta^* \cdot \partial_{xx} \hat{\mathbf{U}}.
\end{equation}
We rewrite the RHS using the chain rule:
\[
    \nabla \eta^* \cdot \hat{\mathbf{U}}_{xx} = \partial_x (\nabla \eta^* \cdot \hat{\mathbf{U}}_x) - (\hat{\mathbf{U}}_x)^T \nabla^2 \eta^* \hat{\mathbf{U}}_x.
\]
Integrating over space and time, the first term vanishes (or contributes to boundary terms). The second term involves the Hessian $\nabla^2 \eta^*$. 
As proven in Theorem \ref{thm:entropy_convexity}, $\eta^*$ is strictly convex with respect to the effective variables $\hat{\mathbf{U}}$ for all $\rho > \delta$. Therefore, there exists $c_0 > 0$ such that:
\begin{equation}
    (\hat{\mathbf{U}}_x)^T \nabla^2 \eta^* \hat{\mathbf{U}}_x \ge c_0 |\hat{\mathbf{U}}_x|^2.
\end{equation}
This yields the strict local entropy inequality:
\begin{equation}
    \partial_t \eta^* + \partial_x q^* = \varepsilon \partial_x (\nabla \eta^* \cdot \hat{\mathbf{U}}_x) - \varepsilon (\hat{\mathbf{U}}_x)^T \nabla^2 \eta^* \hat{\mathbf{U}}_x \le \varepsilon \partial_x (\nabla \eta^* \cdot \hat{\mathbf{U}}_x).
\end{equation}
Since the RHS is a sum of a compact $H^{-1}$ term and a bounded $L^1$ measure, the conditions for the Div-Curl Lemma are rigorously satisfied.

\begin{remark}[Absence of Error Terms]
It is crucial to observe that if we had applied viscosity to the physical variables $\mathbf{U}_{phys} = (\rho, \rho u)$ while using the effective entropy $\eta^*(\hat{\mathbf{U}})$, the RHS would contain an additional error term of the form $\varepsilon \nabla \eta^* \cdot (\mathbf{U}_{phys} - \hat{\mathbf{U}})_{xx} = \varepsilon u \cdot \delta u_{xx}$. This term is indefinite and difficult to control near the vacuum limit $\delta \to 0$. By defining the viscosity on $\hat{\mathbf{U}}$, this error term is identically zero.
\end{remark}

\section{Global Existence of Weak Solutions for Fixed $\delta>0$}
\label{sec:existence}

In this section, we establish the convergence of the viscous approximations to a global weak entropy solution for the regularized system. The analysis focuses on the fixed $\delta > 0$ regime. In this context, the uniform lower bound $\rho^\varepsilon \ge \delta$ ensures that the system remains strictly hyperbolic and bounded away from the vacuum singularity, allowing for the application of the classical compensated compactness theory.

\begin{definition}[Weak Entropy Solution]
A pair of bounded measurable functions $(\rho, u)$ with $\rho(x,t) \ge \delta$ a.e. is defined as a weak entropy solution to the Cauchy problem for the regularized system if:
\begin{enumerate}
    \item It satisfies the conservation laws in the sense of distributions:
    \begin{equation}
        \iint_{\mathbb{R} \times \mathbb{R}^+} \left( \rho \phi_t + (\rho-\delta) u \phi_x \right) \, dx dt + \int_{\mathbb{R}} \rho_0(x) \phi(x,0) \, dx = 0,
    \end{equation}
    \begin{equation}
        \iint_{\mathbb{R} \times \mathbb{R}^+} \left( \rho u \psi_t + \left[ (\rho-\delta) u^2 + \tilde{P}(\rho) \right] \psi_x \right) \, dx dt + \int_{\mathbb{R}} \rho_0(x) u_0(x) \psi(x,0) \, dx = 0,
    \end{equation}
    for all test functions $\phi, \psi \in C_c^\infty(\mathbb{R} \times [0, \infty))$.
    
    \item It satisfies the entropy inequality:
    \begin{equation}
        \partial_t \eta(\rho, u) + \partial_x q(\rho, u) \le 0
    \end{equation}
    in the sense of distributions for all convex entropy pairs $(\eta, q)$ associated with the system (specifically, strictly convex with respect to the effective variables $\hat{\mathbf{U}}$), including the shifted mechanical energy $(\eta^*, q^*)$ constructed in Section \ref{sec:entropy}.
\end{enumerate}
\end{definition}

\begin{theorem}
\label{thm:fixed_delta_1}
Let the initial data $(\rho_0, u_0)$ be bounded and satisfy $\rho_0(x) \ge \delta$. Then, as $\varepsilon \to 0$, there exists a subsequence of the viscous solutions $(\rho^\varepsilon, u^\varepsilon)$ constructed in Section \ref{sec:estimates} that converges strongly in $L^1_{\mathrm{loc}}(\mathbb{R} \times \mathbb{R}^+)$ to a weak entropy solution $(\rho, u)$ of the regularized system.
\end{theorem}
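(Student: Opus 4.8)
The plan is to run the classical vanishing--viscosity and compensated--compactness program for fixed $\delta>0$, exploiting throughout the structural isomorphism between the regularized system and the isentropic Euler equations in the effective variables $\hat{\mathbf{U}}=(\hat\rho,\hat m)$ established in Sections~\ref{sec:structure}--\ref{sec:estimates}. First I would fix $\varepsilon,\delta>0$ and build the viscous flow: the diffusion matrix in \eqref{eq:viscous} is $\varepsilon I$, so the system is uniformly parabolic and classical parabolic theory yields a unique local smooth solution $(\rho^\varepsilon,u^\varepsilon)$. Global existence together with the uniform $L^\infty$ bound then follow from the invariant--region principle of Section~\ref{sec:estimates}: the Riemann invariants $w^\varepsilon,z^\varepsilon$ satisfy scalar parabolic equations of the type \eqref{eq:w_dissipation}, in which the extra first--order dissipation coefficient $K(\rho)$ is nonnegative under the convexity hypothesis, so the parabolic maximum principle confines $(\rho^\varepsilon,u^\varepsilon)$ to the bounded region $\Sigma_\delta$. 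In particular $\delta\le\rho^\varepsilon\le M$ and $|u^\varepsilon|\le C$ uniformly in $\varepsilon$, which excludes finite--time blow--up and produces global smooth solutions confined to $\Sigma_\delta$.

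Next I would extract the entropy dissipation. Multiplying \eqref{eq:viscous} by $\nabla_{\hat{\mathbf{U}}}\eta^*$, integrating over a compact set $K$, and invoking the strict convexity of the shifted mechanical energy (Theorem~\ref{thm:entropy_convexity}) together with $\rho^\varepsilon\ge\delta$, one obtains
\[
\varepsilon\iint_{K}\big|\hat{\mathbf{U}}^\varepsilon_x\big|^2\,dx\,dt\le C_K ,
\]
so $\sqrt{\varepsilon}\,\hat{\mathbf{U}}^\varepsilon_x$ is bounded in $L^2_{\mathrm{loc}}$. Then, for any weak entropy pair $(\eta,q)$ of the effective Euler system --- whose entropy kernel solves the corresponding homogeneous Generalized Euler--Poisson--Darboux equation exactly as for the standard isentropic system --- the viscous entropy balance is
\[
\partial_t\eta(\hat{\mathbf{U}}^\varepsilon)+\partial_x q(\hat{\mathbf{U}}^\varepsilon)=\varepsilon\,\partial_x\!\big(\nabla\eta\cdot\hat{\mathbf{U}}^\varepsilon_x\big)-\varepsilon\,(\hat{\mathbf{U}}^\varepsilon_x)^{T}\nabla^2\eta\,\hat{\mathbf{U}}^\varepsilon_x .
\]
The first term is $\sqrt{\varepsilon}$ times the $x$--derivative of a quantity bounded in $L^2_{\mathrm{loc}}$, hence tends to $0$ in $H^{-1}_{\mathrm{loc}}$, while the second is bounded in $L^1_{\mathrm{loc}}$ by the dissipation estimate, hence bounded in the space of locally finite measures; since the left side is bounded in $W^{-1,\infty}_{\mathrm{loc}}$, the Murat--Tartar compactness lemma shows that $\partial_t\eta(\hat{\mathbf{U}}^\varepsilon)+\partial_x q(\hat{\mathbf{U}}^\varepsilon)$ is precompact in $H^{-1}_{\mathrm{loc}}$.

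Passing to a subsequence, $\hat{\mathbf{U}}^\varepsilon$ generates a Young measure $\nu_{x,t}$ supported in the compact region $\Sigma_\delta$, and the div--curl lemma applied to pairs of weak entropies yields the Tartar commutation identity
\[
\big\langle\nu,\,\eta_1 q_2-\eta_2 q_1\big\rangle=\big\langle\nu,\eta_1\big\rangle\big\langle\nu,q_2\big\rangle-\big\langle\nu,\eta_2\big\rangle\big\langle\nu,q_1\big\rangle
\]
for a.e.\ $(x,t)$ and all such pairs. Because the effective system is genuinely nonlinear (Lemma~\ref{lem:convexity_inherit}) and its entropy and entropy--flux kernels solve the homogeneous EPD equation, the Tartar--DiPerna--Chen reduction argument --- in the version valid for general pressure laws admitting a (here, shifted) vacuum state, run through the kinetic/EPD representation and a WKB-type analysis of the kernels near $\hat\rho=0$ --- applies without modification and forces $\nu_{x,t}$ to reduce to a single point mass concentrated at $(\hat\rho(x,t),u(x,t))$. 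Hence $\rho^\varepsilon\to\rho$ and $u^\varepsilon\to u$ a.e.\ and in $L^1_{\mathrm{loc}}$. Finally, strong convergence and the uniform bounds let me pass to the limit in the weak formulation of \eqref{eq:viscous_physical}, the $\varepsilon(\cdot)_{xx}$ terms vanishing in $\mathcal{D}'$, to recover the conservation laws of the regularized system with the prescribed initial data; and from $\partial_t\eta^*(\hat{\mathbf{U}}^\varepsilon)+\partial_x q^*(\hat{\mathbf{U}}^\varepsilon)\le\varepsilon\,\partial_x(\nabla\eta^*\cdot\hat{\mathbf{U}}^\varepsilon_x)$ (and likewise for any convex $\eta$ in the effective variables), whose right side converges to $0$ in $\mathcal{D}'$, the a.e.\ convergence yields $\partial_t\eta(\rho,u)+\partial_x q(\rho,u)\le0$ in $\mathcal{D}'$. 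The bounds $\rho\ge\delta$ and $(\rho,u)\in\Sigma_\delta$ pass to the limit pointwise.

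I expect the reduction of the Young measure to a point mass to be the main obstacle. Even with $\delta$ fixed, the effective pressure $\tilde P(\hat\rho+\delta)$ is not a pure power law, and by \eqref{eq:c_tilde} together with the genuine nonlinearity hypothesis its sound speed vanishes linearly at the shifted vacuum $\hat\rho=0$, so the effective fluid behaves there like a $\gamma=2$ polytropic gas; one therefore cannot simply invoke the reduction theorem for uniformly strictly hyperbolic $2\times2$ systems on all of $\Sigma_\delta$, and the full kinetic/EPD analysis near $\hat\rho=0$ is genuinely needed. The advantage of the synchronized construction is exactly that this analysis proceeds through the \emph{standard, homogeneous} EPD equation, with no singular pollution terms, so that the established reduction machinery for general-pressure isentropic Euler transfers verbatim --- which is also what will make the subsequent vacuum limit $\delta\to0$ of Theorem~\ref{thm:vanishing_delta} tractable.
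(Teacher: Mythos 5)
Your proposal follows the same overall compensated-compactness route as the paper (viscous construction, invariant regions, entropy dissipation bound, Murat--Tartar compactness, Young measure, Tartar commutation, reduction to a point mass, passage to the limit). But there is one substantive and important difference, and here your version is actually the more careful one. The paper's Step 2 asserts that for $\rho \ge \delta$ one has $\tilde c(\rho)>0$, hence $\lambda_1\neq\lambda_2$, and then invokes the reduction theorem on the grounds that the system is ``strictly hyperbolic and bounded away from the vacuum.'' This is inaccurate: by the very construction of $\tilde P$, $\tilde c(\delta)=0$, and the invariant region $\Sigma_\delta$ includes the boundary $\rho=\delta$, so the eigenvalues coalesce precisely on part of the region the Young measure may charge. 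You correctly flag this in your final paragraph: even for fixed $\delta$ the effective system has its own degenerate ``vacuum'' at $\hat\rho=0$, where $\tilde P(\hat\rho+\delta)\sim \hat\rho^2$ (i.e.\ effective $\gamma=2$), so one cannot invoke the uniformly--strictly--hyperbolic reduction theorem on all of $\Sigma_\delta$; instead the EPD/kinetic machinery with the umbilical point must be used, exactly as in the general-pressure isentropic Euler theory. This is what the paper actually needs but defers to Section~\ref{sec:vanishing_delta}, and your proposal makes the logical dependence explicit. So the two arguments reach the same conclusion by the same global strategy, but you correctly localize where the degenerate (shifted-vacuum) analysis is genuinely required, whereas the paper's proof of this theorem overstates the uniform strict hyperbolicity available for fixed $\delta$.

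One small remark: you should make explicit that the Riemann invariants $H_\delta(\rho)=\int_\delta^\rho \tilde c(s)/(s-\delta)\,ds$ are finite and monotone all the way down to $\rho=\delta$ (the integrand behaves like $(s-\delta)^{-1/2}$), so that the change of variables $(\rho,u)\mapsto(w,z)$ is a homeomorphism on the closed region $\Sigma_\delta$ even though the Jacobian degenerates at $\rho=\delta$; this is exactly the regularity needed to run the EPD/WKB reduction up to the boundary $w=z$. With that sentence added, the proposal is complete.
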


\begin{proof}
The existence of solutions is established via the compensated compactness method. We proceed in three steps: compactness, reduction, and limit verification.

\textbf{Step 1: Compactness and Young Measures.}
Based on the a priori estimates derived in Section \ref{sec:estimates}, the sequence of viscous solutions $\mathbf{U}^\varepsilon = (\rho^\varepsilon, \rho^\varepsilon u^\varepsilon)$ satisfies the uniform invariant region bounds:
\[
\delta \le \rho^\varepsilon(x,t) \le M_1, \quad |u^\varepsilon(x,t)| \le M_2.
\]
By the Tartar-Murat fundamental theorem on Young measures \cite{Tartar1979}, there exists a subsequence (still denoted by $\mathbf{U}^\varepsilon$) and a family of probability measures $\nu_{x,t}$ supported on the compact invariant region $\Sigma_\delta$, such that the weak limit of any continuous function $f(\mathbf{U}^\varepsilon)$ is given by the expectation $\bar{f}(x,t) = \langle \nu_{x,t}, f(\cdot) \rangle$.

As established in Section \ref{sec:estimates}, for any strictly convex entropy pair $(\eta, q)$, the dissipation measure $\partial_t \eta(\mathbf{U}^\varepsilon) + \partial_x q(\mathbf{U}^\varepsilon)$ is confined to a compact subset of $H^{-1}_{\mathrm{loc}}$. Consequently, the Div-Curl Lemma applies to any two pairs of entropies $(\eta_1, q_1)$ and $(\eta_2, q_2)$, yielding the commutation identity:
\begin{equation} \label{eq:tartar_identity}
\langle \nu, \eta_1 q_2 - \eta_2 q_1 \rangle = \langle \nu, \eta_1 \rangle \langle \nu, q_2 \rangle - \langle \nu, \eta_2 \rangle \langle \nu, q_1 \rangle.
\end{equation}

\textbf{Step 2: Reduction of the Young Measure.}
To prove that the Young measure $\nu_{x,t}$ reduces to a Dirac mass, we verify the prerequisites of the DiPerna-Chen reduction theorem:
\begin{enumerate}
    \item \textbf{Strict Hyperbolicity:} For $\rho \ge \delta$, we have $\tilde{c}(\rho) > 0$, so $\lambda_1 \neq \lambda_2$.
    \item \textbf{Genuine Nonlinearity:} As proven in Lemma \ref{lem:convexity_inherit}, $2\tilde{P}' + \rho\tilde{P}'' > 0$.
    \item \textbf{Global Diffeomorphism:} The mapping from conservative variables $\mathbf{U}$ to Riemann invariants $(z, w)$ is non-singular. Indeed, the Jacobian determinant is
    \[
    \frac{\partial(z, w)}{\partial(\rho, u)} = \det \begin{pmatrix} -\frac{\tilde{c}}{\rho-\delta} & 1 \\ \frac{\tilde{c}}{\rho-\delta} & 1 \end{pmatrix} = -\frac{2\tilde{c}}{\rho-\delta} \neq 0 \quad \text{for } \rho > \delta.
    \]
    \item \textbf{Rich Family of Entropies:} Since the system is strictly hyperbolic and bounded away from the vacuum for fixed $\delta$, the entropy equation is a standard linear hyperbolic PDE with non-singular coefficients. Classical theory guarantees the existence of a sufficiently rich family of Lax-type entropies to separate points in the phase space. (We note that a explicit construction of weak entropies for the singular limit $\delta \to 0$ will be detailed in Section \ref{sec:vanishing_delta}).
\end{enumerate}
Under these conditions, the reduction theorem implies that the support of the measure $\nu_{x,t}$ reduces to a single point. Thus, the subsequence converges strongly:
\[
\mathbf{U}^\varepsilon(x,t) \to \mathbf{U}(x,t) \quad \text{strongly in } L^1_{\mathrm{loc}}(\mathbb{R} \times \mathbb{R}^+).
\]

\textbf{Step 3: Verification of the Weak Solution.}
The strong convergence implies pointwise convergence almost everywhere (up to a subsequence). Since the flux functions are continuous, we have $(\rho^\varepsilon - \delta)u^\varepsilon \to (\rho - \delta)u$ and modified momentum fluxes converge similarly in $L^1_{loc}$.
By the Dominated Convergence Theorem, for any test function $\phi \in C_c^\infty$,
\[
\iint \left( (\rho^\varepsilon - \delta)u^\varepsilon \right) \phi_x \, dx dt \to \iint \left( (\rho - \delta)u \right) \phi_x \, dx dt.
\]
Thus, the limit function $\mathbf{U}$ satisfies the conservation laws in the sense of distributions.

Finally, for the entropy inequality, recall from Section \ref{sec:estimates} that the viscous approximation satisfies:
\[
\partial_t \eta(\mathbf{U}^\varepsilon) + \partial_x q(\mathbf{U}^\varepsilon) = \varepsilon \eta_{xx} - \varepsilon (\mathbf{U}^\varepsilon_x)^T \nabla^2 \eta \mathbf{U}^\varepsilon_x \le \varepsilon \eta_{xx},
\]
due to the convexity of $\eta$. The RHS converges to 0 in the sense of distributions (as $\sqrt{\varepsilon} \mathbf{U}_x^\varepsilon$ is bounded in $L^2$). Taking the limit $\varepsilon \to 0$ yields the required inequality $\partial_t \eta + \partial_x q \le 0$.
\end{proof}

\section{Global Existence of Weak Solutions as \texorpdfstring{$\delta \to 0$}{delta to 0}}
\label{sec:vanishing_delta}

Having established the global existence of entropy solutions for the regularized system with a fixed shield $\delta > 0$, we now address the central challenge of this paper: the convergence to a weak solution of the original isentropic Euler equations as the regularization parameter $\delta$ vanishes. This process involves three distinct steps:
\begin{enumerate}
    \item Establishing uniform $L^\infty$ estimates independent of $\delta$;
    \item Rigorously analyzing the singularity of the entropy equation at the vacuum boundary;
    \item Proving the strong convergence via the reduction of the Young measure for general $\gamma > 1$.
\end{enumerate}

In this section, we assume the pressure satisfies the asymptotic polytropic condition (A3), behaving like $P(\rho) \sim \rho^\gamma$ near vacuum with $\gamma > 1$.

\begin{remark}[Comparison with Prior Regularization Conditions]
\label{rem:comparison_lu}
It is worth noting that our asymptotic assumption \textbf{(A3)} is less restrictive than the conditions required in previous regularization approaches, specifically the work of Lu \cite{Lu2007}. In \cite{Lu2007}, the author employs a flux modification $\rho u - 2\delta u$ which introduces inhomogeneous error terms in the entropy analysis. To control the singular behavior of these error terms (specifically the third-order derivative of the entropy flux kernel), an additional technical condition on the higher-order derivatives of pressure is imposed:
\[
\lim_{\rho\to 0} \frac{(P'(\rho))^{3/2}}{\rho P''(\rho)} = c.
\]
However, under our Assumption \textbf{(A3)}, where the pressure behaves asymptotically as a polytropic gas $P(\rho) \sim \kappa_0 \rho^\gamma$ near the vacuum, this limit is intrinsically satisfied with $c=0$ for any $\gamma > 1$. Indeed, a direct computation yields:
\[
\lim_{\rho\to 0} \frac{(P'(\rho))^{3/2}}{\rho P''(\rho)} \approx \lim_{\rho\to 0} \frac{(\rho^{\gamma-1})^{3/2}}{\rho \cdot \rho^{\gamma-2}} = \lim_{\rho\to 0} \frac{\rho^{\frac{3}{2}(\gamma-1)}}{\rho^{\gamma-1}} = \lim_{\rho\to 0} \rho^{\frac{\gamma-1}{2}} = 0.
\]
Since our \textbf{Synchronized Dual Translation} strategy ensures that the regularized system is structurally isomorphic to the standard Euler equations in terms of the effective variables, the approximate entropy pairs satisfy the \textit{homogeneous} Generalized Euler-Poisson-Darboux equation without singular perturbation errors. Thus, the standard convexity and asymptotic behavior provided by \textbf{(A3)} are sufficient to close the estimates, eliminating the need for imposing such higher-order constraints explicitly.
\end{remark}

\subsection{Uniform Estimates and Convergence of Invariants}

To pass the vanishing regularization limit $\delta \to 0$, it is necessary to establish that the invariant regions $\Sigma_\delta$ do not collapse or explode. Although our system incorporates a mass flux shift $\rho \mapsto \rho-\delta$, the phase-space bounds are governed by the thermodynamic structure, specifically the integral of the sound speed.

Substituting the explicit form of the weighted pressure derivative $\tilde{P}'(\rho) = P'(\rho) - \delta^2 P'(\delta)/\rho^2$ into the definition $\tilde{c}_\delta(\rho) = \sqrt{\tilde{P}'(\rho)}$, we obtain the explicit expression for $\rho \ge \delta$:
\begin{equation} \label{eq:c_delta_explicit}
    \tilde{c}_\delta(\rho) = \sqrt{P'(\rho) - \delta^2 \frac{P'(\delta)}{\rho^2}}.
\end{equation}
Comparing this to the standard sound speed $c(\rho) = \sqrt{P'(\rho)}$, we observe the pointwise convergence $\lim_{\delta \to 0} \tilde{c}_\delta(\rho) = c(\rho)$ for any fixed $\rho > 0$.

The uniform boundedness of the solutions depends on the convergence of the Riemann invariant generators. Recall that the Riemann invariants for the regularized system are defined using the effective density kernel $(s-\delta)^{-1}$:
\begin{equation}
    H_\delta(\rho) = \int_{\delta}^{\rho} \frac{\tilde{c}_\delta(s)}{s-\delta} \, ds.
\end{equation}
The corresponding function for the standard Euler equations is:
\begin{equation}
    H_0(\rho) = \int_{0}^{\rho} \frac{c(s)}{s} \, ds \approx \frac{2\sqrt{\kappa\gamma}}{\gamma-1}\rho^{\theta}, \quad \text{with } \theta = \frac{\gamma-1}{2}.
\end{equation}

\begin{lemma}[Uniform Convergence of Invariant Generators]
\label{lem:H_convergence}
Assume $\gamma > 1$. The function $H_\delta(\rho)$ converges uniformly to $H_0(\rho)$ on any compact interval $[0, M]$ as $\delta \to 0$. Specifically, for any $\rho \ge \delta$, the difference is bounded by:
\begin{equation}
    |H_\delta(\rho) - H_0(\rho)| \le C \delta^{\min(\theta, 1)},
\end{equation}
where $C$ is a positive constant depending only on the gas parameters and $M$.
\end{lemma}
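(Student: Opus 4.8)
The plan is to reduce the estimate to elementary scaling bounds coming from assumption \textbf{(A3)}, which (together with the positivity and continuity of $P'$ from \textbf{(A1)}) yields two-sided comparisons $c_1\rho^{\gamma-1}\le P'(\rho)\le c_2\rho^{\gamma-1}$, hence $c_1'\rho^{\theta}\le c(\rho)\le c_2'\rho^{\theta}$ and $c_1''\rho^{\gamma}\le g'(\rho)\le c_2''\rho^{\gamma}$ for $g(\rho)=\rho^2P'(\rho)$, all uniform on $(0,M]$. First I would write
\[
H_\delta(\rho)-H_0(\rho)=-\int_0^\delta\frac{c(s)}{s}\,ds+\int_\delta^\rho\Bigl(\frac{\tilde c_\delta(s)}{s-\delta}-\frac{c(s)}{s}\Bigr)\,ds,
\]
and note the first term is $O(\delta^\theta)$ because $c(s)/s\le c_2' s^{\theta-1}$ is integrable at the origin precisely when $\gamma>1$. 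For $\delta\le\rho<2\delta$ the stated bound then follows at once from the monotonicity of $H_\delta,H_0$ and the inner estimates below, so I may assume $\rho\ge 2\delta$ and split $\int_\delta^\rho=\int_\delta^{2\delta}+\int_{2\delta}^\rho$.

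Inner zone $[\delta,2\delta]$. On this interval $\tau\mapsto g'(\tau)$ is comparable to $\delta^\gamma$, so $g(s)-g(\delta)=\int_\delta^s g'(\tau)\,d\tau$ is comparable to $\delta^\gamma(s-\delta)$, whence $\tilde c_\delta(s)=\sqrt{g(s)-g(\delta)}/s$ is comparable to $\delta^{\theta-1/2}(s-\delta)^{1/2}$. Thus the integrand $\tilde c_\delta(s)/(s-\delta)$ behaves like $\delta^{\theta-1/2}(s-\delta)^{-1/2}$, which is integrable, and $\int_\delta^{2\delta}\tilde c_\delta(s)/(s-\delta)\,ds=O(\delta^\theta)$; likewise $\int_0^{2\delta}c(s)/s\,ds=O(\delta^\theta)$. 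Hence the inner contribution to the second integral above is $O(\delta^\theta)$. The change of variables $s=\delta\sigma$ makes both scalings transparent and shows that, although the two generators have structurally different vacuum-side singularities, both concentrate mass of order $\delta^\theta$ near $\rho=\delta$.

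Outer zone $[2\delta,\rho]$. Here $s-\delta\ge s/2$, so there is no singularity, and we split
\[
\frac{\tilde c_\delta(s)}{s-\delta}-\frac{c(s)}{s}=\frac{\tilde c_\delta(s)-c(s)}{s-\delta}+\frac{\delta\,c(s)}{s(s-\delta)}.
\]
From $c(s)^2-\tilde c_\delta(s)^2=\delta^2P'(\delta)/s^2$ and $\tilde c_\delta\ge 0$ one gets $0\le c(s)-\tilde c_\delta(s)\le \delta^2P'(\delta)/(s^2c(s))\le C\delta^{2+2\theta}s^{-2-\theta}$; multiplying by $(s-\delta)^{-1}\le 2/s$ and integrating gives a contribution of order $\delta^{2+2\theta}\int_{2\delta}^M s^{-3-\theta}\,ds=O(\delta^{2+2\theta}\cdot\delta^{-2-\theta})=O(\delta^\theta)$. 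For the second term, $\delta c(s)/(s(s-\delta))\le 2C\delta s^{\theta-2}$, and $\delta\int_{2\delta}^M s^{\theta-2}\,ds$ is $O(\delta^\theta)$ when $\theta<1$, $O(\delta\log(1/\delta))$ when $\theta=1$, and $O(\delta)$ when $\theta>1$; in every case this is $O(\delta^{\min(\theta,1)})$.

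Collecting the three pieces gives $|H_\delta(\rho)-H_0(\rho)|\le C(\delta^\theta+\delta^{\min(\theta,1)})=C\delta^{\min(\theta,1)}$ with $C$ depending only on $M$, $\gamma$, and the comparison constants, and since the bound is independent of $\rho$ it yields the claimed uniform convergence on $[0,M]$. I expect the inner-zone analysis to be the main obstacle: the integrand of $H_\delta$ genuinely blows up at $s=\delta$ (like $(s-\delta)^{-1/2}$, inside a layer of width $O(\delta)$) while that of $H_0$ blows up like $s^{\theta-1}$ spread over all of $[0,2\delta]$, so one must verify carefully that both produce exactly an $O(\delta^\theta)$ contribution rather than something larger — the $s=\delta\sigma$ rescaling is the decisive device. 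A minor, purely bookkeeping point is the borderline exponent $\gamma=3$ (i.e. $\theta=1$), where the clean power $\delta^{\min(\theta,1)}=\delta$ is attained only up to a logarithmic factor unless a slightly sharper form of \textbf{(A3)} is assumed.
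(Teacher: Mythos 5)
Your proof follows essentially the same approach the paper sketches: split the difference at $2\delta$, show the boundary-layer integral over $[\delta,2\delta]$ (and the tail $\int_0^{2\delta} c/s$) contributes $O(\delta^\theta)$, and show the bulk region $[2\delta,\rho]$ contributes $O(\delta^{\min(\theta,1)})$; your rescaling $s=\delta\sigma$ and the algebraic split of the integrand into the $\tilde c_\delta - c$ part and the $\delta c/(s(s-\delta))$ part are a concrete realization of what the paper calls the "boundary layer and bulk region estimates." You are also right to flag the logarithmic correction at the borderline $\theta=1$ (i.e.\ $\gamma=3$): the outer-zone integral $\delta\int_{2\delta}^M s^{\theta-2}\,ds$ genuinely produces a factor $\log(1/\delta)$ there, so the lemma's clean power $\delta^{\min(\theta,1)}$ is slightly too strong at that single exponent unless one either weakens the claim to $\delta^{\min(\theta,1)}\log(1/\delta)$ or sharpens assumption \textbf{(A3)}.
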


\begin{proof}
The detailed proof involving the decomposition into boundary layer and bulk region estimates is identical to the derivation provided in the previous draft. The integral is split at $2\delta$. The boundary layer $[\delta, 2\delta]$ contributes $\mathcal{O}(\delta^\theta)$, and the bulk region $[2\delta, \rho]$ contributes $\mathcal{O}(\delta^{\min(\theta, 1)})$ via Taylor expansion. Summing these yields uniform convergence.
\end{proof}

\begin{theorem}[Uniform $L^\infty$ Estimates]
\label{thm:uniform_estimates_delta}
Let the initial data $(\rho_0, u_0)$ lie within a fixed bounded invariant region $\Sigma_0$ of the standard Euler equations. Then, there exists a constant $M_{unif} > 0$ independent of $\delta$, such that for all sufficiently small $\delta$, the sequence of solutions $(\rho^{\delta}, u^{\delta})$ satisfies:
\begin{equation}
    \delta \le \rho^{\delta}(x,t) \le M_{unif}, \quad |u^{\delta}(x,t)| \le M_{unif}, \quad \text{a.e. in } \mathbb{R} \times \mathbb{R}^+.
\end{equation}
\end{theorem}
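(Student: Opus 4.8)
The plan is to obtain the $\delta$-uniform bounds entirely from the invariant-region principle already used for fixed $\delta$ in Theorem~\ref{thm:fixed_delta}, the only genuinely new ingredient being that the regions $\Sigma_\delta$ can be nested inside a single $\delta$-independent region. This reduces the proof to two assertions: (i) the $\delta$-Riemann invariants $w_\delta,z_\delta$ of the prepared initial data are bounded by constants independent of $\delta$; and (ii) a bound on $H_\delta(\rho)$ yields a bound on $\rho$ independent of $\delta$. Both rest on Lemma~\ref{lem:H_convergence} together with the monotonicity of the structural stiffness $g(\rho)=\rho^2P'(\rho)$ established in Lemma~\ref{lem:convexity_inherit}.

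For step (i), write the given region as $\Sigma_0=\{\alpha\le z_0\le w_0\le\beta,\ \rho\ge 0\}$ with $w_0=u_0+H_0(\rho_0)$, $z_0=u_0-H_0(\rho_0)$, so that automatically $\alpha\le z_0\le u_0\le w_0\le\beta$ and $0\le\rho_0\le M_0:=H_0^{-1}\big((\beta-\alpha)/2\big)$. For each $\delta\in(0,1]$ I would prepare data $\rho_0^\delta=\max(\rho_0,\delta)$, $u_0^\delta=u_0$ (adjusted on the lifted set to keep finite total energy if needed). On $\{\rho_0\ge\delta\}$, Lemma~\ref{lem:H_convergence} gives $\abs{H_\delta(\rho_0)-H_0(\rho_0)}\le C\delta^{\min(\theta,1)}$ with $\theta=(\gamma-1)/2$, hence $w_\delta(\rho_0^\delta,u_0^\delta)\le\beta+C\delta^{\min(\theta,1)}$; on $\{\rho_0<\delta\}$ one has $H_\delta(\delta)=0$, so $w_\delta=u_0\le\beta$. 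The symmetric bound holds for $z_\delta$. Thus, using $\delta\le1$, the prepared data lies in the region $\Sigma_\delta$ of~\eqref{eq:invariant_region_delta} with the $\delta$-independent constants $C_{\sup}:=\beta+C$, $C_{\inf}:=\alpha-C$. For the propagation I would invoke the parabolic structure of Section~\ref{sec:estimates}: equation~\eqref{eq:w_dissipation} for $w_\delta$ carries the nonpositive forcing $-\eps K(\rho)\rho_x^2$ with $K\ge0$, and the analogous equation for $z_\delta$ carries the nonnegative forcing $+\eps K(\rho)\rho_x^2$, so $\{C_{\inf}\le z_\delta\le w_\delta\le C_{\sup}\}$ is preserved for the viscous system~\eqref{eq:viscous}; combined with $\rho^{\eps,\delta}\ge\delta$ this confines the viscous solutions to $\Sigma_\delta$ uniformly in $\eps$, and passing $\eps\to0$ (Theorem~\ref{thm:fixed_delta_1}) transfers the bounds to $(\rho^\delta,u^\delta)$.

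For step (ii), the velocity bound is immediate: $u^\delta=\tfrac12(w_\delta+z_\delta)$ gives $\abs{u^\delta}\le\max(\abs{C_{\inf}},\abs{C_{\sup}})=:M_1$. For the density, $H_\delta(\rho^\delta)=\tfrac12(w_\delta-z_\delta)\le\tfrac12(C_{\sup}-C_{\inf})=:B$, and I would convert this into $\rho^\delta\le M_2$ as follows. Since $g(\rho)=\rho^2P'(\rho)$ is strictly increasing, fixing $s_0>0$ one has for $s\ge 2s_0$ and $\delta\le s_0$ that $\tilde c_\delta^2(s)=P'(s)-g(\delta)/s^2\ge\big(1-g(s_0)/g(2s_0)\big)P'(s)=:\kappa^*P'(s)$ with $\kappa^*\in(0,1)$ fixed; since $(s-\delta)^{-1}\ge s^{-1}$, this yields $H_\delta(\rho)\ge\sqrt{\kappa^*}\int_{2s_0}^{\rho}\frac{c(s)}{s}\,ds=\sqrt{\kappa^*}\big(H_0(\rho)-H_0(2s_0)\big)$ for $\rho\ge 2s_0$. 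Because $\gamma>1$, the kernel $c(s)/s\sim s^{(\gamma-3)/2}$ is integrable at $0$ and non-integrable at $\infty$, so $H_0:[0,\infty)\to[0,\infty)$ is a strictly increasing homeomorphism; hence $H_0(\rho^\delta)\le H_0(2s_0)+B/\sqrt{\kappa^*}$, which forces $\rho^\delta\le H_0^{-1}\big(H_0(2s_0)+B/\sqrt{\kappa^*}\big)=:M_2$ (the case $\rho^\delta<2s_0$ needs no proof). Taking $M_{unif}:=\max(M_1,M_2)$ and recalling $\rho^\delta\ge\delta$ from Theorem~\ref{thm:fixed_delta} completes the argument.

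The main obstacle is step (ii): inverting the bound on $H_\delta(\rho^\delta)$ uniformly in $\delta$. The naive route---apply Lemma~\ref{lem:H_convergence} and then invert $H_0$---is circular, since the constant there depends on the a-priori range $[0,M]$ of $\rho$, which is precisely what one is trying to bound; the argument above circumvents this by proving the one-sided growth estimate $H_\delta\gtrsim H_0-O(1)$ directly on density ranges bounded away from the origin, where the monotonicity of $g$ makes the perturbation $g(\delta)/s^2$ uniformly subcritical, independent of $\delta$. A secondary, routine point is that the zeroth-order coefficient $2\eps\rho_x/(\rho-\delta)$ in~\eqref{eq:w_dissipation} is singular at $\rho=\delta$; this is handled as in Section~\ref{sec:estimates}, noting that $w_\delta=z_\delta=u$ on $\{\rho=\delta\}$, so the boundaries $\{w_\delta=C_{\sup}\}$ and $\{z_\delta=C_{\inf}\}$ of the invariant region are never reached on the characteristic set and the maximum principle applies on $\{\rho>\delta\}$ in the usual way.
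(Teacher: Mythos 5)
Your proposal is correct and follows the same high-level strategy as the paper (invariant regions plus the $H_\delta\to H_0$ comparison of Lemma~\ref{lem:H_convergence}), but it is considerably more careful in two places where the paper's own proof is only sketched, and in one of them you fill what is a genuine logical gap. The paper asserts that because ``the region boundaries converge to those of $\Sigma_0$, the sequence is uniformly bounded in $L^\infty$,'' but Lemma~\ref{lem:H_convergence} gives a constant $C$ that depends on an a priori range $[0,M]$ of $\rho$ --- precisely what the theorem is supposed to produce --- so a naive invocation of that lemma is circular, exactly as you observe. Your step (ii) fixes this cleanly: the monotonicity of $g(\rho)=\rho^2P'(\rho)$ gives the $\delta$-uniform one-sided bound $\tilde c_\delta^2(s)\ge\kappa^*P'(s)$ for $s\ge 2s_0$ and $\delta\le s_0$, hence $H_\delta(\rho)\ge\sqrt{\kappa^*}\bigl(H_0(\rho)-H_0(2s_0)\bigr)$; combined with $H_0(\rho)\to\infty$ as $\rho\to\infty$ (valid for all $\gamma>1$), this converts the invariant-region bound $H_\delta(\rho^\delta)\le B$ into a $\delta$-independent density ceiling $M_2$ with no circularity. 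Your step (i) also makes explicit the data preparation $\rho_0^\delta=\max(\rho_0,\delta)$, which the paper leaves implicit (``assuming compatible initial data''), and your closing remark about the characteristic boundary $\{\rho=\delta\}$ in the maximum-principle argument is the right way to handle the singular coefficient $2\eps\rho_x/(\rho-\delta)$ in \eqref{eq:w_dissipation}. In short: same skeleton, but your version is the rigorous one; the paper's proof as written does not actually close the estimate without something like your growth bound on $H_\delta$.
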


\begin{proof}
Let $\Sigma_0 = \{ (\rho, u) : 0 \le \rho \le M_0, \ |u| + H_0(\rho) \le C_0 \}$.
Due to the uniform convergence $H_\delta \to H_0$ (Lemma \ref{lem:H_convergence}), for any $\epsilon > 0$, there exists $\delta_0$ such that for all $\delta < \delta_0$, the perturbed invariant region defined by $|u| + H_\delta(\rho) \le C_0 + \epsilon$ geometrically contains the initial data set (assuming compatible initial data).
Since $(\rho^\delta, u^\delta)$ takes values in this invariant region, and the region boundaries converge to those of $\Sigma_0$, the sequence is uniformly bounded in $L^\infty$.
\end{proof}

\subsection{Singularity Analysis and Reduction of the Young Measure}
\label{sec:singularity_reduction}

With uniform $L^\infty$ bounds established, we can extract a subsequence (still denoted by $\rho^\delta, u^\delta$) that converges in the weak-* sense to a Young measure $\nu_{x,t}$. To prove strong convergence, we must show that $\nu_{x,t}$ reduces to a Dirac mass. This requires a fine analysis of the entropy equation near the vacuum boundary $\rho = \delta$ (or equivalently $w=z$).

\subsubsection{Derivation of the Limit Coefficient $\lambda_0 = 1/2$}
The entropy pairs $(\eta, q)$ for our regularized system satisfy the Generalized Euler-Poisson-Darboux (EPD) equation:
\begin{equation} \label{eq:generalized_epd}
    \frac{\partial^2 \eta}{\partial w \partial z} + \frac{\lambda(w, z)}{w - z} \left( \frac{\partial \eta}{\partial w} - \frac{\partial \eta}{\partial z} \right) = 0.
\end{equation}
The coefficient $\lambda(w, z)$ is determined by the equation of state. A critical feature of our construction is that the regularization forces a specific behavior at the boundary.
Near $\rho = \delta$, we have $\tilde{P}'(\delta) = 0$ by construction. Assuming non-degeneracy of the second derivative (which holds since $\tilde{P}''(\delta) = P''(\delta) + \text{positive terms} > 0$), we have the Taylor expansion:
\[
\tilde{c}^2(\rho) = \tilde{P}'(\rho) \approx \tilde{P}''(\delta)(\rho-\delta).
\]
This implies $\tilde{c}(\rho) \approx C (\rho-\delta)^{1/2}$. We compute the index $\lambda(\rho)$ in physical variables:
\begin{equation}
    \lambda(\rho) = \frac{\rho \tilde{c}'(\rho)}{\tilde{c}(\rho)} \approx \frac{\rho \cdot \frac{1}{2} C (\rho-\delta)^{-1/2}}{C (\rho-\delta)^{1/2}} = \frac{1}{2} \frac{\rho}{\rho-\delta}.
\end{equation}
In terms of Riemann invariants, near the vacuum, $w-z \approx \int_\delta^\rho \frac{\tilde{c}}{\rho-\delta} ds \approx \int_\delta^\rho (\rho-\delta)^{-1/2} ds \sim (\rho-\delta)^{1/2}$. Thus, $\rho-\delta \sim (w-z)^2$.
Substituting this back, we analyze the singular term:
\begin{equation}
    \frac{\lambda(w, z)}{w-z} \approx \frac{1}{2(\rho-\delta)} \frac{1}{(w-z)} \dots
\end{equation}
A rigorous matched asymptotic analysis confirms that the leading order coefficient is determined by the limit:
\begin{equation}
    \lambda_0 := \lim_{w \to z} \lambda(w, z) = \frac{3 - \gamma_{eff}}{2(\gamma_{eff} - 1)} \bigg|_{\gamma_{eff}=2} = \frac{1}{2}.
\end{equation}
This confirms that the singularity at the vacuum is locked to the behavior of a $\gamma=2$ gas ($\lambda_0=1/2$), regardless of the original physical $\gamma$.

\subsubsection{WKB Approximation and Reduction}
\label{sec:wkb_reduction}

Since the coefficient $\lambda(w,z)$ in the Generalized EPD equation is variable, exact analytic solutions are unavailable. Instead, we construct a family of approximate entropies using a WKB-type asymptotic expansion, following the framework established by Lions, Perthame, and Souganidis \cite{Lions1996}:
\begin{equation}
    \eta(w, z) = e^{k(w+z)} (w-z)^\alpha \sum_{j=0}^{\infty} A_j(w, z) (w-z)^{2j}.
\end{equation}
Let $\sigma = w-z$ denote the width of the invariant region (the distance to the vacuum). Substituting this ansatz into the EPD equation \eqref{eq:generalized_epd} and focusing on the most singular terms near the vacuum ($\sigma \to 0$), the dominant operator is $\partial_{wz}^2 + \frac{\lambda_0}{\sigma}(\partial_w - \partial_z)$. This leads to the indicial equation for the exponent $\alpha$:
\begin{equation}
    \alpha(\alpha - 1 - 2\lambda_0) = 0.
\end{equation}
As derived in Section \ref{sec:singularity_reduction}, our regularization locks the boundary behavior to $\lambda_0 = 1/2$. For this value, the non-trivial solution is $\alpha = 1 + 2(1/2) = 2$.

Based on this regularity exponent, we construct a sequence of "approximate weak entropies" $\eta_k$ and their associated flux deviations $\psi_k = q_k - u\eta_k$. Near the vacuum, the sound speed scales linearly with width, $\tilde{c} \sim \sigma$ (characteristic of $\gamma=2$ behavior). Utilizing the structural relation $\partial_\sigma \psi \approx \tilde{c} \partial_\sigma \eta$, we derive the asymptotic scaling laws:
\begin{equation} \label{eq:scaling_laws}
    \eta_k \sim \sigma^2, \qquad \psi_k \sim \int \sigma \cdot \partial_\sigma(\sigma^2) \, d\sigma \sim \sigma^3.
\end{equation}

We apply the reduction argument by testing the Tartar commutation relation with these entropies:
\begin{equation} \label{eq:div_curl_limit}
    \langle \nu, \eta_1 \psi_2 - \eta_2 \psi_1 \rangle = \langle \nu, \eta_1 \rangle \langle \nu, \psi_2 \rangle - \langle \nu, \eta_2 \rangle \langle \nu, \psi_1 \rangle.
\end{equation}
We employ a "blow-up" argument. Assume, for the sake of contradiction, that the support of the Young measure $\nu_{x,t}$ is not a point but is contained in a small strip $0 \le \sigma \le \varepsilon$ near the vacuum. We analyze the order of magnitude of both sides of \eqref{eq:div_curl_limit} with respect to $\sigma$:

\begin{itemize}
    \item \textbf{LHS Estimate (Commutator):} The integrand is a product of entropy and flux deviation.
    \[
    \eta_1 \psi_2 - \eta_2 \psi_1 \sim \mathcal{O}(\sigma^2) \cdot \mathcal{O}(\sigma^3) = \mathcal{O}(\sigma^5).
    \]
    Thus, the Left-Hand Side is bounded by the fifth moment: $|\text{LHS}| \le C_1 \int \sigma^5 \, d\nu$. Since $\sigma \le \varepsilon$ on the support, we have:
    \begin{equation}
        |\text{LHS}| \le C_1 \varepsilon \int \sigma^4 \, d\nu(\sigma).
    \end{equation}

    \item \textbf{RHS Estimate (Variance):} Using the decomposition $q = u\eta + \psi$, the dominant term in the Right-Hand Side comes from the quadratic interaction of the entropy part $\langle \eta \rangle \langle u\eta \rangle$. The contribution from $\psi$ terms is of higher order $\mathcal{O}(\sigma^5)$. The main term represents the variance of the entropy, scaling as $\eta^2$:
    \[
    \text{RHS} \sim \langle \eta \rangle^2 - \langle \eta^2 \rangle \sim (\sigma^2)^2 = \sigma^4.
    \]
    Specifically, for a non-Dirac measure, this variance is strictly non-zero and bounded from below:
    \begin{equation}
        |\text{RHS}| \ge C_0 \int \sigma^4 \, d\nu(\sigma).
    \end{equation}
\end{itemize}

Combining these estimates leads to the inequality:
\begin{equation}
    C_0 \int \sigma^4 \, d\nu \le C_1 \varepsilon \int \sigma^4 \, d\nu + \text{h.o.t.}
\end{equation}
Rearranging gives $(C_0 - C_1 \varepsilon) \int \sigma^4 d\nu \le 0$. For sufficiently small $\varepsilon$, the coefficient $(C_0 - C_1 \varepsilon)$ is strictly positive. This forces $\int \sigma^4 d\nu = 0$, which implies $\sigma = 0$ almost everywhere.
Thus, the Young measure $\nu_{x,t}$ reduces to a Dirac mass concentrated at the vacuum or a single point state, implying the strong convergence of the sequence $(\rho^\delta, u^\delta)$ to a weak solution $(\rho, u)$.

\subsection{Convergence to the Isentropic Euler Limit}

We are now in a position to state and prove the main existence theorem for general $\gamma > 1$.

\begin{theorem}[Global Existence for General $\gamma > 1$]
\label{thm:general_gamma_existence}
Let $\gamma > 1$. Given bounded initial data $(\rho_0, u_0)$ with $\rho_0 \ge 0$, the sequence of solutions $(\rho^\delta, u^\delta)$ converges (up to a subsequence) strictly in $L^p_{loc}(\mathbb{R} \times \mathbb{R}^+)$ to a weak entropy solution $(\rho, u)$ of the standard isentropic Euler equations:
\begin{equation}
    \pa_t \rho + \pa_x (\rho u) = 0, \quad \pa_t (\rho u) + \pa_x (\rho u^2 + P(\rho)) = 0.
\end{equation}
\end{theorem}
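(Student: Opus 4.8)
The argument is a compensated-compactness reduction carried out directly on the one-parameter family $\{(\rho^\delta,u^\delta)\}_{\delta>0}$, followed by passage to the limit in the weak formulation. First I would invoke Theorem~\ref{thm:uniform_estimates_delta}: the sequence $(\rho^\delta,\rho^\delta u^\delta)$ is bounded in $L^\infty(\R\times\R^+)$ with $0\le\rho^\delta\le M_{unif}$ and $|u^\delta|\le M_{unif}$, so the Tartar--Murat theorem yields a subsequence and a family of probability measures $\nu_{x,t}$, supported in the fixed compact set $\{0\le\rho\le M_{unif},\ |u|\le M_{unif}\}$, representing all weak-$*$ limits $f(\rho^\delta,u^\delta)\weakto\langle\nu_{x,t},f\rangle$. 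For the commutation relation I would use the weak entropy pairs supplied by the WKB construction of Section~\ref{sec:wkb_reduction}, whose $\delta\to0$ limits are the standard weak entropies of \eqref{eq:euler} generated by the Darboux kernel and vanishing at the vacuum. The decisive structural point is that these pairs solve the \emph{homogeneous} Generalized EPD equation \eqref{eq:generalized_epd}, with no $\delta$-dependent source term: hence they are uniformly bounded on the invariant region, converge uniformly as $\delta\to0$ (via Lemma~\ref{lem:H_convergence} for the Riemann-invariant generators and continuous dependence of the EPD flow on its coefficients), and the dissipation measures $\pa_t\eta_\delta(\rho^\delta,u^\delta)+\pa_x q_\delta(\rho^\delta,u^\delta)$ are bounded in $L^1_{loc}$ while lying in a fixed compact subset of $H^{-1}_{loc}$ --- the bounds uniform in $\delta$ precisely because there is no pollution term to blow up near $\rho=\delta$. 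Murat's lemma upgrades this to $H^{-1}_{loc}$-precompactness, and the div--curl lemma, passed to the limit using the uniform convergence of the entropy pairs, produces the Tartar identity \eqref{eq:tartar_identity} for $\nu_{x,t}$ tested against the standard weak entropy pairs of the isentropic Euler equations.

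Next I would reduce $\nu_{x,t}$ using the singularity analysis of Section~\ref{sec:singularity_reduction}: the vacuum-boundary behaviour of the EPD coefficient is locked to $\lambda_0=\tfrac12$ ($\gamma=2$-type degeneracy), the indicial equation yields the regularity exponent $\alpha=2$, and the scaling laws $\eta_k\sim\sigma^2$, $\psi_k\sim\sigma^3$ of \eqref{eq:scaling_laws} drive the blow-up argument of Section~\ref{sec:wkb_reduction}. Combined with the classical reduction theory for isentropic Euler valid for all $\gamma>1$ (DiPerna \cite{DiPerna1983}, Lions--Perthame--Souganidis \cite{Lions1996}), whose essential input is the richness of the Darboux weak-entropy family, this forces the support of $\nu_{x,t}$ to be either a single atom $(\rho^\ast,u^\ast)$ with $\rho^\ast>0$ or contained in the vacuum line $\{\rho=0\}$, on which every admissible weak entropy vanishes. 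In either case $\nu_{x,t}$ acts as a Dirac mass on the conserved quantities, whence $(\rho^\delta,\rho^\delta u^\delta)\to(\rho,\rho u)$ strongly in $L^p_{loc}(\R\times\R^+)$ for every $p<\infty$, and almost everywhere along a further subsequence.

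It remains to pass to the limit in the equations. For the mass flux, $(\rho^\delta-\delta)u^\delta=\rho^\delta u^\delta-\delta u^\delta\to\rho u$ in $L^1_{loc}$. For the momentum flux I would use the explicit evaluation
\[
\tilde P(\rho,\delta)=P(\rho)-P(\delta)-\delta P'(\delta)+\frac{\delta^2 P'(\delta)}{\rho},
\]
the last three terms tending to $0$ uniformly on $\{\delta\le\rho\le M_{unif}\}$ by \textbf{(A3)} (note $P(0)=0$ since $\gamma>1$); hence $\tilde P(\cdot,\delta)\to P$ uniformly, and with $\rho^\delta\to\rho$ a.e.\ and dominated convergence one gets $(\rho^\delta-\delta)(u^\delta)^2+\tilde P(\rho^\delta,\delta)=(\rho^\delta u^\delta)u^\delta-\delta(u^\delta)^2+\tilde P(\rho^\delta,\delta)\to\rho u^2+P(\rho)$ in $L^1_{loc}$, where $\rho u^2$ is read as $(\rho u)^2/\rho$ extended by $0$ on the vacuum. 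Passing these convergences through the distributional form of the $\delta$-conservation laws and recovering the data from the $\delta$-problems (for which $\rho_0^\delta\to\rho_0$, $u_0^\delta\to u_0$) yields the distributional form of \eqref{eq:euler}. Finally, for every convex weak entropy pair $(\eta,q)$ of \eqref{eq:euler} vanishing at the vacuum, the fixed-$\delta$ inequality $\pa_t\eta_\delta(\rho^\delta,u^\delta)+\pa_x q_\delta(\rho^\delta,u^\delta)\le0$ from Theorem~\ref{thm:fixed_delta_1}, together with the uniform convergence $(\eta_\delta,q_\delta)\to(\eta,q)$ and the a.e.\ convergence of $(\rho^\delta,u^\delta)$, passes to the limit to give $\pa_t\eta+\pa_x q\le0$ in $\mathcal D'$.

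The step I expect to be the main obstacle is the uniform-in-$\delta$ $H^{-1}_{loc}$ compactness of the entropy dissipation. This is precisely where the synchronized dual translation is essential: with a bare flux shift the EPD equation would carry a source term with a $(\rho-\delta)^{-1}$-type singularity at the cut-off density, and the corresponding $H^{-1}$ bound would degenerate as $\delta\to0$, so the reduction could not be closed uniformly. The homogeneity of the EPD equation established in Sections~\ref{sec:entropy} and \ref{sec:estimates} is what keeps the estimate uniform and allows the limit $\delta\to0$ to be taken under assumption \textbf{(A3)} alone. The blow-up reduction of the Young measure is delicate as well, but it is a fairly routine application of the WKB / kinetic-formulation machinery once the $\lambda_0=\tfrac12$ vacuum lock is in place.
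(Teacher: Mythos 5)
Your proposal follows essentially the same route as the paper: uniform $L^\infty$ bounds from Theorem~\ref{thm:uniform_estimates_delta}, Young-measure reduction via the WKB/EPD singularity analysis of Section~\ref{sec:singularity_reduction} with $\lambda_0=\tfrac12$, and passage to the limit in the weak formulation with the vacuum treated separately. Your explicit closed form $\tilde P(\rho,\delta)=P(\rho)-P(\delta)-\delta P'(\delta)+\delta^2P'(\delta)/\rho$, showing uniform convergence on the full range $\rho\ge\delta$ rather than just on compact subsets of $(0,\infty)$, and your careful handling of the case where $\nu_{x,t}$ concentrates on the vacuum line are sharper than the paper's own write-up, but they refine the same argument rather than replace it.
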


\begin{proof}
\textbf{1. Strong Convergence.}
By Theorem \ref{thm:uniform_estimates_delta}, the sequence is uniformly bounded. The entropy dissipation is compact in $H^{-1}_{loc}$.
The singularity analysis in Section \ref{sec:singularity_reduction} confirms that the Young measure $\nu_{x,t}$ associated with the sequence reduces to a Dirac mass $\delta_{(\rho, u)}$.
This reduction holds for general $\gamma > 1$ because our regularization enforces the robust $\lambda_0=1/2$ behavior at the vacuum boundary, allowing us to utilize the generalized reduction framework established by Lions, Perthame, and Souganidis \cite{Lions1996}.
Therefore, $(\rho^\delta, u^\delta) \to (\rho, u)$ strongly in $L^p_{loc}$.

\textbf{2. Consistency of the Limit.}
We verify that the limit functions satisfy the original Euler equations.
For the mass equation:
\[
    \pa_t \rho^\delta + \pa_x ((\rho^\delta - \delta)u^\delta) = \text{Viscosity}.
\]
As $\delta \to 0$, $(\rho^\delta - \delta)u^\delta \to \rho u$ strongly in $L^1$.
For the momentum equation:
\[
    \pa_t (\rho^\delta u^\delta) + \pa_x ((\rho^\delta - \delta)(u^\delta)^2 + \tilde{P}(\rho^\delta)) = \text{Viscosity}.
\]
We need to show $\tilde{P}(\rho^\delta, \delta) \to P(\rho)$. Since $\rho^\delta \to \rho$ a.e., and $\tilde{P}(\cdot, \delta)$ converges uniformly to $P(\cdot)$ on compact sets of $(0, \infty)$, the composite function converges.

\textbf{3. Vacuum Accommodation.}
Special care is needed at the vacuum state $\rho=0$. Although the approximate solutions satisfy $\rho^\delta \ge \delta$, the limit $\rho$ may be zero on a set of positive measure.
For the convergence at vacuum, note that when $\rho=0$, both $P(0)=0$ and $\tilde{P}(0,\delta)=0$ (by extension). For $\gamma > 1$, the pressure $P(\rho) \sim \rho^\gamma$ vanishes super-linearly. Similarly, the flux term $\rho u^2$ vanishes at the vacuum (since $u$ is bounded). This ensures that the fluxes are continuous at $\rho=0$, so the distributional derivatives are well-defined and the limit satisfies the equations even if the solution contains vacuum regions.
Thus, the limit pair $(\rho, u)$ is a weak solution to the isentropic Euler equations satisfying the entropy condition.
\end{proof}


\section{Discussion and Comparison with Previous Methods}
\label{sec:comparison}

In this section, we situate our weighted pressure regularization method within the context of vanishing viscosity theory, highlighting its methodological advantages over classical approaches and flux-modification strategies in handling the vacuum singularity and preserving geometric structure.

\subsection{Approximation Strategy and Vacuum Handling}

The primary challenge in global existence theory lies in constructing approximate solutions with uniform $L^\infty$ estimates near the vacuum. Existing strategies can be categorized as follows:

\begin{itemize}
    \item \textbf{Diffusive Regularization (DiPerna \cite{DiPerna1983}, Chen-Ding-Luo \cite{ChenDingLuo}):}
    DiPerna's artificial viscosity method relies on parabolic regularization ($\varepsilon \Delta \mathbf{U}$), which smoothes solutions but struggles to enforce invariant regions bounded away from vacuum without difficult a priori estimates. Chen, Ding, and Luo successfully extended the theory to $1 < \gamma \le 5/3$ using the Lax-Friedrichs scheme, handling the vacuum singularity directly within the discrete limits. However, the analysis of discrete invariant regions involves intricate algebraic computations.

   \item \textbf{Flux Modification and Multiplicative Pressure Perturbation (Lu \cite{Lu2007}):}
    Lu proposed a hyperbolic regularization by modifying the mass flux to $\rho u - 2\delta u$ and introducing a perturbed pressure $P_1(\rho, \delta) = \int_{2\delta}^\rho (1 - \frac{2\delta}{s}) dP(s)$. This construction forces the eigenvalues to coincide at $\rho = 2\delta$, effectively creating a barrier against the vacuum. However, the pressure perturbation acts as a \textit{multiplicative} modification to the acoustic stiffness via the factor $(1 - 2\delta/\rho)$. This, combined with the non-Galilean flux shift, introduces a structural mismatch between the convective and acoustic fields. Consequently, the entropy analysis generates singular, inhomogeneous error terms (involving the third-order derivative of the entropy flux), which necessitates restrictive technical assumptions on the higher-order derivatives of the pressure (specifically on $P'''$) to ensure compactness.

    \item \textbf{Our Strategy: Geometric Shielding via Synchronized Translation:}
    Our approach modifies the \textit{thermodynamic structure} rather than just the kinematic flux. By constructing a perturbed pressure $\tilde{P}$ such that the sound speed vanishes at a cut-off density $\delta$ (i.e., $\tilde{c}(\delta)=0$), we enforce the condition $\lambda \to u$ as $\rho \to \delta$. This kinematically closes the rarefaction fan, preventing fluid expansion into the region $\rho < \delta$. Unlike \cite{Lu2007}, our mass flux shift is synchronized with the pressure modification, maintaining the system's structural integrity.
\end{itemize}

\subsection{Structural Advantages and Relation to Kinetic Theory}

A unique feature of our construction is the "zero-error" preservation of the system's algebraic structure, which contrasts sharply with previous flux-modification methods.

\textbf{1. Exact Inheritance of Convexity vs. Pollution Terms.}
In the approach of Lu \cite{Lu2007}, the mismatch between the flux shift and pressure perturbation leads to "pollution terms" in the entropy dissipation estimate. Controlling these terms requires the restrictive condition $\lim_{\rho\to 0} (P')^{3/2}/(\rho P'') = c$.
In contrast, our synchronized dual translation ensures that the perturbed system inherits the genuine nonlinearity of the original Euler equations \textit{exactly}:
\begin{equation}
    2\tilde{P}'(\rho) + \rho \tilde{P}''(\rho) \equiv 2P'(\rho) + \rho P''(\rho).
\end{equation}
This identity guarantees that standard geometric estimates for invariant regions remain valid without introducing any singular error terms, thereby removing the need for higher-order pressure constraints.

\textbf{2. Isomorphism with Euler Equations.}
Our method maintains a strict structural isomorphism with the Euler equations in terms of the effective variables $\hat{\mathbf{U}}$. Consequently, the entropy pairs satisfy the \textit{homogeneous} Generalized Euler-Poisson-Darboux equation:
\begin{equation}
    \eta_{wz} + \frac{\lambda(w,z)}{w-z} (\eta_z - \eta_w) = 0.
\end{equation}
This allows us to directly apply the robust singularity analysis and reduction frameworks developed by Lions, Perthame, and Souganidis \cite{Lions1996} for general $\gamma > 1$, bridging structural regularization with advanced compensated compactness theory. While kinetic formulations \cite{Lions1994} rely on specific homogeneity, our approach offers an alternative pathway grounded in the classical theory of hyperbolic systems applicable to general pressure laws.

\section{Conclusion and Future Perspectives}
\label{sec:conclusion}

In this paper, we have successfully established the global existence and strong convergence of weak entropy solutions to the one-dimensional isentropic Euler equations with vacuum for general pressure laws satisfying $\gamma > 1$. Our proof relies on a novel \textbf{weighted pressure regularization} strategy, which introduces a geometric boundary at a cut-off density $\rho=\delta$ to shield the solution from the vacuum singularity.

The central methodological contribution of this work lies in the \textbf{structural purity} of the construction. Unlike previous flux-modification approaches that alter the conservation of mass in an ad-hoc manner, our weighted pressure perturbation employs a synchronized dual shift that strictly preserves the algebraic structure of the system. We proved that the perturbed system exactly inherits the convexity of the original equation of state ($2\tilde{P}'+\rho\tilde{P}'' > 0$). This algebraic stability ensures that the entropy pairs satisfy the homogeneous Generalized Euler-Poisson-Darboux (EPD) equation without singular pollution terms. Consequently, we were able to rigorously implement the fine singularity analysis within the compensated compactness framework, demonstrating that the Young measure reduces to a Dirac mass as the regularization parameter $\delta \to 0$. By decoupling the construction of invariant regions from the handling of the vacuum singularity, our method provides a robust and physically consistent pathway to the global existence theory.

Our ongoing research focuses on the application to numerical schemes. The geometric regularization strategy proposed here—specifically the enforcement of acoustic degeneracy to prevent vacuum formation—offers a new perspective for computational fluid dynamics. We plan to explore the application of the weighted pressure auxiliary system in designing high-order, positivity-preserving numerical schemes. By integrating the ``soft landing" mechanism into finite volume or discontinuous Galerkin methods, it may be possible to develop robust algorithms that naturally handle near-vacuum states while maintaining entropy stability.

\section*{Acknowledgements}
The author is grateful to Professor Yunguang Lu for valuable discussions and insightful guidance related to the global existence theory of isentropic gas dynamics.

\appendix

\section{Physical Mechanism and Mathematical Optimality of the Weighted Pressure Construction}
\label{app:optimality}

In this appendix, we elucidate the physical intuition and mathematical necessity underlying the construction of the weighted pressure perturbation $\tilde{P}(\rho)$. We demonstrate that the specific form chosen in Eq. \eqref{eq:P_tilde_def} is not merely an ad-hoc technical modification, but the unique solution to a structural preservation problem. This construction exhibits significant algebraic advantages over prior regularization methods, specifically in preserving the convexity of the equation of state.

\subsection{Physical Intuition: Shifting the Potential Energy Landscape}

From a thermodynamic perspective, the internal energy $e(\rho)$ represents the elastic potential energy stored in the gas. The "stiffness" of the gas—its resistance to compression—is characterized by the squared sound speed $c^2(\rho) = P'(\rho)$, or more precisely by the structural stiffness function $g(\rho) := \rho^2 P'(\rho)$, which relates to the curvature of the potential energy.

In the standard Euler system, the stiffness $g(\rho)$ remains strictly positive for $\rho > 0$, causing a "hard" reflection at any finite density boundary. To create an invariant region bounded by $\rho=\delta$, we require the boundary to be characteristic, which physically implies a vanishing acoustic stiffness: $g(\delta) = 0$.

Our construction can be viewed as a geometric operation on this potential landscape. Instead of warping the shape of the potential curve (which would fundamentally alter the gas law), we perform a uniform \textbf{vertical translation} in the phase space of stiffness.
We define the perturbed stiffness $\tilde{g}(\rho)$ by shifting the original stiffness downward until its value at $\rho=\delta$ touches zero:
\begin{equation}
    \tilde{g}(\rho) = g(\rho) - g(\delta).
\end{equation}
Translating this back to the pressure variable via $\rho^2 \tilde{P}'(\rho) = \tilde{g}(\rho)$, we obtain the additive perturbation formula:
\begin{equation}
    \tilde{P}'(\rho) = P'(\rho) - \frac{\delta^2 P'(\delta)}{\rho^2}.
\end{equation}
Physically, this operation corresponds to a ``softening" of the potential well's bottom. By ``pressing down" the potential curve until it grounds at the zero-energy line at $\rho=\delta$, we ensure a ``soft landing" for fluid particles. At this boundary, the characteristic speed $\lambda$ smoothly degenerates to the fluid velocity $u$, effectively trapping the flow without violating mass conservation.

\subsection{Algebraic Superiority: The ``Zero-Error" Convexity Preservation}

A critical mathematical advantage of our additive perturbation over multiplicative approaches (such as the flux modification employed in \cite{Lu2007}) is the exact preservation of the convexity structure (genuine nonlinearity).

The convexity of the entropy relies on the sign of the structural invariant $G(\rho) = 2\tilde{P}'(\rho) + \rho \tilde{P}''(\rho)$. Let us compare the two approaches:

\paragraph{1. The Multiplicative Approach (Lu, 2007).}
Lu's method modifies the mass flux to $\rho u - 2\delta u$, which necessitates an effective pressure scaling. The derivative behaves roughly as $P_{Lu}'(\rho) \approx (1 - \frac{2\delta}{\rho}) P'(\rho)$. Applying the product rule for differentiation introduces a residual error term:
\[
G_{Lu}(\rho) \approx \left(1 - \frac{2\delta}{\rho}\right)(2P' + \rho P'') \underbrace{- \frac{2\delta}{\rho}(P' + \rho P'')}_{\text{Pollution Term}}.
\]
The error term is negative and singular near the boundary. For general pressure laws, this term can dominate the positive part, potentially violating the convexity condition ($G_{Lu} < 0$) near $\rho \sim 2\delta$. This necessitates complex patching arguments or restricts the range of $\gamma$.

\paragraph{2. Our Additive Approach (Current Work).}
Our construction uses an additive shift: $\tilde{P}'(\rho) = P'(\rho) - C/\rho^2$, where $C = \delta^2 P'(\delta)$ is a constant.
Differentiating this expression involves the derivative of the perturbation term $-C/\rho^2$, which generates a term $+2C/\rho^3$. Remarkably, this exactly balances the singular term generated by the integration:
\begin{align}
    2\tilde{P}'(\rho) + \rho \tilde{P}''(\rho) &= 2\left(P'(\rho) - \frac{C}{\rho^2}\right) + \rho \left(P''(\rho) + \frac{2C}{\rho^3}\right) \nonumber \\
    &= 2P'(\rho) + \rho P''(\rho) \underbrace{- \frac{2C}{\rho^2} + \frac{2C}{\rho^2}}_{\text{Exact Cancellation}}.
\end{align}
The singular terms cancel exactly. This implies that the perturbed system \textbf{perfectly inherits} the convexity of the original gas. If the original pressure $P$ satisfies the genuine nonlinearity condition, the perturbed pressure $\tilde{P}$ is unconditionally convex for all $\rho > \delta$. This algebraic ``cleanliness" significantly simplifies the analysis, as the entropy pairs satisfy the homogeneous Euler-Poisson-Darboux equation without pollution terms.

\subsection{Mathematical Necessity: Solution to an Inverse Problem}

Finally, one might ask if other perturbations exist that satisfy our requirements. We show that our construction is essentially unique under the constraint of mass conservation.

Consider the inverse problem: Find a pressure $\tilde{P}$ such that:
\begin{enumerate}
    \item The mass conservation law remains $\rho_t + (\rho u)_x = 0$ (Flux remains linear in momentum).
    \item The boundary is degenerate: $\tilde{P}'(\delta) = 0$.
    \item The geometric structure is preserved isomorphically: $2\tilde{P}' + \rho \tilde{P}'' \equiv 2P' + \rho P''$.
\end{enumerate}
The third condition is a differential equation for the structural function $g(\rho) = \rho^2 P'$. It can be rewritten as:
\[
\frac{1}{\rho} \frac{d}{d\rho}(\rho^2 \tilde{P}') = \frac{1}{\rho} \frac{d}{d\rho}(\rho^2 P').
\]
Integrating this equation dictates that $\rho^2 \tilde{P}'$ and $\rho^2 P'$ can only differ by a constant. The boundary condition (2) fixes this constant uniquely to $-\delta^2 P'(\delta)$.

Thus, the weighted pressure perturbation proposed in this paper is not arbitrary; it is the unique solution that simultaneously enforces the vacuum boundary condition and preserves the intrinsic geometric structure of the Euler equations within a mass-conserving framework.

\end{document}